\begin{document}

\title{Total variation approximation of random orthogonal matrices by Gaussian matrices \thanks{Supported in part by NSF grant DMS 1612589 to Elizabeth Meckes.}}

\titlerunning{Approximation of random matrices }        

\author{Kathryn Stewart}


\institute{Kathryn Stewart \at
              Department of Mathematics, Case Western Reserve University, 231 Yost Hall, Cleveland, OH 44106 USA \\
              \email{kathrynstewart@case.edu}           
}

\date{Received: date / Accepted: date}

\maketitle

\begin{abstract}
The topic of this paper is the asymptotic distribution of the entries of random orthogonal matrices distributed according to Haar measure. We examine the total variation distance between the joint distribution of the entries of $W_n$, the $p_n \times q_n$ upper-left block of a Haar-distributed matrix, and that of $p_nq_n$ independent standard Gaussian random variables, and show that the total variation distance converges to zero when $p_nq_n = o(n)$. 
\keywords{Random orthogonal matrix \and Central limit theorem \and Wishart matrices \and Moments}
\subclass{60F05 \and 60C05}
\end{abstract}

\section{Introduction}
\label{intro}
Let $U_n$ be a random orthogonal matrix which is distributed according to Haar measure on the orthogonal group $\mathcal{O}(n)$. The asymptotic distribution of the individual entries of such a Haar-distributed matrix is classical. Borel \cite{Borel} showed in 1906 that a single coordinate of a randomly chosen point on the sphere is asymptotically Gaussian. That is, if $X = (X_1, \cdots, X_n)$ is a uniform random vector in 
\begin{equation*}
\mathbb{S}^{n-1}= \{ x \in \mathbb{R}^n : ||x||=1 \},
\end{equation*}
then for all $t \in \mathbb{R}$
\begin{equation*}
\mathbb{P}[ \sqrt{n}X_1 \leq t] \xrightarrow{n \to \infty} \frac{1}{\sqrt{2 \pi}} \int_{- \infty}^t e^{-x^2/2} dx.
\end{equation*} 
It follows by one of the standard constructions of Haar measure that the sequence $\{ \sqrt{n}[U_n]_{1,1} \}$ converges weakly to the standard Gaussian distribution as $n \to \infty$. By symmetry, this means that all of the individual entries of a random  orthogonal matrix are approximately Gaussian, for large matrices. 

Diaconis and Freedman \cite{DiaFre} gave a substantial strengthening of Borel's result, showing that the joint distribution of the first $k$ coordinates of a uniform random point on the sphere is close in total variation distance to $k$ independent identically distributed Gaussian random variables if $k = o(n)$, as follows.

\begin{theorem}[Diaconis-Freedman] 
Let $X$ be a uniform random point on $\sqrt{n} \mathbb{S}^{n-1}$, for $n \geq 5$, and let $1 \leq k \leq n-4$. Let $Z$ be a standard Gaussian random vector in $\mathbb{R}^k$. Then the total variation distance between the distribution of the first $k$ coordinates of $X$ and the distribution of $Z$ is 
\begin{equation*}
d_{TV}((X_1, \cdots, X_k),Z) \leq \frac{2(k+3)}{n-k-3}.
\end{equation*}
\end{theorem}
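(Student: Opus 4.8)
The plan is to make the comparison by writing down the density of $(X_1,\dots,X_k)$ explicitly and measuring its $L^1$-distance from the Gaussian density. First I would use the standard construction of the uniform measure on $\sqrt{n}\,\mathbb{S}^{n-1}$: if $G=(G_1,\dots,G_n)$ is a standard Gaussian vector in $\mathbb{R}^n$, then $X=\sqrt{n}\,G/\|G\|$ is uniform on $\sqrt n\,\mathbb{S}^{n-1}$. Slicing the sphere at fixed values of the first $k$ coordinates leaves an $(n-k-1)$-sphere, and a surface-area computation gives the density of $(X_1,\dots,X_k)$ as
\begin{equation*}
f_k(x)=C_{n,k}\Bigl(1-\tfrac{\|x\|^2}{n}\Bigr)^{(n-k-2)/2}\mathbbm{1}_{\{\|x\|^2<n\}},\qquad C_{n,k}=\frac{\Gamma(n/2)}{(\pi n)^{k/2}\,\Gamma((n-k)/2)},
\end{equation*}
where the constant is pinned down by a Beta integral in polar coordinates. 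The target density is $\phi_k(x)=(2\pi)^{-k/2}e^{-\|x\|^2/2}$, and since $d_{TV}((X_1,\dots,X_k),Z)=\int_{\mathbb{R}^k}(\phi_k-f_k)^+\,dx$, the whole problem reduces to comparing these two explicit functions.

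A useful simplification is that both $f_k$ and $\phi_k$ are rotationally invariant, depending on $x$ only through $r=\|x\|$. Consequently the total variation distance between the two vector-valued laws equals the distance between the laws of their radii: disintegrating over the radius, conditionally on $r$ both measures are uniform on $r\,\mathbb{S}^{k-1}$ and so coincide. This collapses the estimate to a one-dimensional comparison. On one side $R^2=X_1^2+\cdots+X_k^2=n\,B$ with $B\sim\mathrm{Beta}(k/2,(n-k)/2)$, and on the other side the target is $\chi^2_k$, so it suffices to bound $d_{TV}(nB,\chi^2_k)$.

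To carry out the one-dimensional bound I would study the likelihood ratio $\rho(s)=p(s)/q(s)$ of the density $p$ of $nB$ against the density $q$ of $\chi^2_k$. A direct calculation gives $\rho(s)=c_{n,k}(1-s/n)^{(n-k)/2-1}e^{s/2}$ on $(0,n)$, and $\log\rho$ has a single interior critical point at $s=k+2$; hence $\rho$ is unimodal and $\{p>q\}$ is a single interval $(s_1,s_2)$. Writing $d_{TV}=\int_{s_1}^{s_2}(p-q)\,ds=\mathbb{P}[nB\in(s_1,s_2)]-\mathbb{P}[\chi^2_k\in(s_1,s_2)]$ organizes the whole distance around this one crossing region. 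The remaining work is to control the multiplicative constant $c_{n,k}$ through the Gamma-ratio $\Gamma(n/2)/\Gamma((n-k)/2)$ and to compare $(1-s/n)^{(n-k-2)/2}$ with $e^{-s/2}$ using $-u-\tfrac{u^2}{2(1-u)}\le\log(1-u)\le -u$, together with the negligible Gaussian tail $\mathbb{P}[\chi^2_k>n]$.

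The main obstacle is sharpness. Plugging the pointwise ratio bounds bluntly into $\int q\,|\rho-1|\,ds$ produces a bound of order $k^2/n$, a full factor of $k$ worse than the stated $2(k+3)/(n-k-3)$. Obtaining the correct order $k/n$ requires exploiting the cancellation that occurs because $nB$ and $\chi^2_k$ have matching leading behavior: the linear-in-$s$ terms in $\log\rho$ cancel against $\log c_{n,k}$ to leading order, and only the quadratic discrepancy survives. Making this cancellation quantitative -- equivalently, estimating the two tail probabilities at $s_1$ and $s_2$ to second order, or bounding a chi-square / Kullback--Leibler divergence of order $(k/n)^2$ and invoking Pinsker -- is where the real care is needed, and is what ultimately yields the clean rational bound.
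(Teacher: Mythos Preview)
The paper does not prove this theorem. It is quoted in the Introduction as a known result of Diaconis and Freedman, with citation to \cite{DiaFre}, and is used only as background motivation for the main theorem; no argument for it appears anywhere in the text. So there is no proof in the paper against which to compare your proposal.

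That said, your outline is a faithful reconstruction of the standard route to this bound: the explicit marginal density on the sphere, the reduction by rotational invariance to a one-dimensional comparison between $n\,\mathrm{Beta}(k/2,(n-k)/2)$ and $\chi^2_k$, and the unimodality of the likelihood ratio are all correct and are essentially how Diaconis and Freedman proceed in the original paper. You have also correctly located the only genuine difficulty: a crude pointwise bound on $|\rho-1|$ gives $O(k^2/n)$, and obtaining the sharp $O(k/n)$ with the specific constant $2(k+3)/(n-k-3)$ requires tracking the cancellation between the Gamma-ratio constant and the linear-in-$s$ term in $\log\rho(s)$. Your proposal stops short of actually carrying out that cancellation, so as written it is a plan rather than a proof; but since the present paper contains no proof at all, there is nothing further to compare.
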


The theorem implies that for $k = o(n)$, one can approximate any $k$ entries from the same row or column of a uniform random orthogonal matrix $U_n$ by independent Gaussian random variables. This led Diaconis to consider the question of how many entries of $U_n$ can be simultaneously approximated by independent normal random variables. A sufficient condition was given by Diaconis, Eaton, and Lauritzen \cite{DiaEatLau}, which was improved by Tiefeng Jiang \cite{Jiang} to the following:

\begin{theorem}[Jiang]
Let $\{p_n : n \geq 1 \}$ and $\{ q_n : n \geq 1 \}$ be two sequences of positive integers such that $p_n = o(\sqrt{n})$ and $q_n = o(\sqrt{n})$ as $n \to \infty$. For each $n$, let $U_n$ be a random matrix uniformly distributed on the orthogonal group $\mathcal{O}(n)$ and suppose that $W_n$ is the $p_n \times q_n$ upper left block of $U_n$. Let $X_n$ be a $p_n \times q_n$ matrix of independent identically distributed standard Gaussian random variables, then 
\begin{equation*}
\lim_{n \to \infty} d_{TV}\left( \sqrt{n}W_n, X_n \right) = 0.
\end{equation*}
\end{theorem}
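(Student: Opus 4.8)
The plan is to work directly with the density of the rescaled block $M_n := \sqrt{n}\,W_n$ and compare it to the Gaussian density through an $L^1$ argument. First I would record the explicit density of $M_n$: for $n$ large enough that $p_n+q_n\le n$, the upper-left $p_n\times q_n$ block $W_n$ of a Haar matrix has, with respect to Lebesgue measure, density proportional to $\det(I_{q_n}-w^\top w)^{(n-p_n-q_n-1)/2}$ on $\{w^\top w< I_{q_n}\}$ (taking $p_n\ge q_n$ without loss of generality, since $d_{TV}$ is transpose-invariant), so that $M_n$ has density $f_n(m)\propto\det(I_{q_n}-\tfrac1n m^\top m)^{(n-p_n-q_n-1)/2}$. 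Writing $g$ for the density of $X_n$ and $R_n:=f_n/g$, we have $d_{TV}(\sqrt{n}W_n,X_n)=\tfrac12\,\mathbb E_g\lvert R_n(X_n)-1\rvert$, and since $R_n\ge0$ with $\mathbb E_g R_n=1$ for every $n$, it suffices (by the elementary fact that $R_n\to1$ in probability together with $\mathbb E_g R_n\to1$ forces $L^1$ convergence) to prove that $R_n(X_n)\to1$ in probability under $g$.

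The key simplification is that the normalizing constant need never be computed: because $\mathbb E_g R_n=1$, one has the self-normalized form
\begin{equation*}
R_n(X_n)=\frac{e^{D_n(X_n)}}{\mathbb E_g\,e^{D_n(X_n)}},\qquad D_n(m):=\frac{n-p_n-q_n-1}{2}\log\det\!\Big(I_{q_n}-\tfrac1n m^\top m\Big)+\tfrac12\lVert m\rVert_F^2 .
\end{equation*}
Expanding $\log\det(I-A)=-\sum_{k\ge1}\tfrac1k\operatorname{tr}(A^k)$ with $A=\tfrac1n X_n^\top X_n$, the first-order term cancels $\tfrac12\lVert X_n\rVert_F^2$ up to a factor, leaving
\begin{equation*}
D_n(X_n)=\frac{p_n+q_n+1}{2n}\,\lVert X_n\rVert_F^2-\frac{n-p_n-q_n-1}{4n^2}\,\operatorname{tr}\!\big((X_n^\top X_n)^2\big)-\Delta_n,
\end{equation*}
with $\Delta_n\ge0$ the tail of the series. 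I would then show $D_n(X_n)-\mathbb E_g D_n(X_n)\to0$ in probability by bounding the variance of each piece: $\lVert X_n\rVert_F^2$ is a chi-square with variance $2p_nq_n$, so its contribution has standard deviation of order $\tfrac{p_n+q_n}{n}\sqrt{p_nq_n}=o(1)$; the matrix $X_n^\top X_n$ is Wishart $W_{q_n}(p_n,I)$ with $\mathbb E\operatorname{tr}((X_n^\top X_n)^2)=p_nq_n(p_n+q_n+1)$ and variance of polynomial order $p_nq_n(p_n+q_n)^2$, so this piece also has vanishing standard deviation once $p_n,q_n=o(\sqrt{n})$; and $\Delta_n$ is controlled deterministically on the high-probability event $\{\lVert X_n\rVert_{\mathrm{op}}^2\le n/2\}$ (which holds since $\lVert X_n\rVert_{\mathrm{op}}\lesssim\sqrt{p_n}+\sqrt{q_n}$) via $\Delta_n\lesssim \lVert A\rVert\,\operatorname{tr}((X_n^\top X_n)^2)/n\to0$.

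Finally I would convert this additive concentration into the multiplicative statement. Writing $b_n:=\mathbb E_g D_n(X_n)$ — which may itself diverge, but only as a deterministic shift absorbed by the denominator — the point is that $e^{D_n(X_n)-b_n}\to1$ in probability, so the identity $R_n=e^{D_n-b_n}/\mathbb E_g[e^{D_n-b_n}]$ yields $R_n\to1$ in probability provided $\mathbb E_g e^{D_n-b_n}\to1$. The main obstacle is exactly this last point: upgrading convergence in probability of $e^{D_n-b_n}$ to convergence of its mean requires a uniform-integrability (equivalently, a controlled exponential-moment) estimate for $D_n$, and this is where the compact support of $f_n$ and the Wishart tail must be handled carefully. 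The second most delicate ingredient is the precise variance bound for $\operatorname{tr}((X_n^\top X_n)^2)$, since it is the matrix cross-terms (absent in the single-vector Diaconis--Freedman setting) that force the $o(\sqrt{n})$ scaling rather than merely $p_nq_n=o(n)$.
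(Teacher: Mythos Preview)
Your overall architecture matches the paper's (and Jiang's original): write the density ratio $R_n=f_n/g$, reduce $d_{TV}\to0$ to $R_n\to1$ in probability via the nonnegative-integrand/Scheff\'e argument, Taylor-expand $\log\det(I-A)$, and control the resulting pieces by moment/variance bounds on Wishart traces. Your variance estimates for the first two terms and the deterministic control of $\Delta_n$ on the operator-norm event are all essentially what is needed in the $o(\sqrt n)$ regime.

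The genuine gap is the one you yourself flag, and it is not a technicality. Your self-normalized identity $R_n=e^{D_n-b_n}/\mathbb E_g[e^{D_n-b_n}]$ is correct, but to get $R_n\to1$ in probability from $e^{D_n-b_n}\to1$ in probability you need the \emph{denominator} $\mathbb E_g[e^{D_n-b_n}]\to1$, which is exactly an exponential-moment/uniform-integrability statement for $D_n-b_n$. You cannot bootstrap this from the Scheff\'e step (that step only fires \emph{after} you already know $R_n\to1$ in probability), so the argument as written is circular at this point. Establishing the needed UI directly would require sub-exponential control of the centered Wishart quadratic $\operatorname{tr}((X_n^\top X_n)^2)-\mathbb E[\cdot]$ at scale $1/n$, together with an exponential-moment bound for the chi-square piece; this is doable but is real work you have not supplied, and it is not obviously cheaper than the route you were trying to avoid.

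The paper (following Jiang) sidesteps this entirely by \emph{not} self-normalizing: it keeps the explicit constant $K_n=(2/n)^{pq/2}\prod_{j=1}^{q}\Gamma((n-j+1)/2)/\Gamma((n-p-j+1)/2)$ coming from Eaton's density, expands $\log K_n$ via Stirling/Gamma asymptotics, and proves directly that $\log K_n + \mathbb E_g[D_n]\to0$ (this is the content of the paper's Lemmas on $K_n$ and on $\sum E_j$). Then $\log R_n=(D_n-b_n)+(\log K_n+b_n)\to0$ in probability with no exponential-moment input required. In short, ``the normalizing constant need never be computed'' is precisely the step that fails; computing it is the missing idea.

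One side remark: your closing sentence, that the matrix cross-terms ``force the $o(\sqrt n)$ scaling rather than merely $p_nq_n=o(n)$,'' is not correct --- the paper's main theorem shows $p_nq_n=o(n)$ already suffices. The $o(\sqrt n)$ hypothesis in Jiang's original argument reflects the coarseness of a two-term Taylor expansion, not an intrinsic barrier.
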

Jiang further showed that the theorem was sharp for square submatrices; that is, there are $x > 0$ and $y>0$ such that $p_n \sim x\sqrt{n}$ and $q_n \sim y\sqrt{n}$ and 
\begin{equation*}
\liminf_{n \to \infty} d_{TV} \left( \sqrt{n}W_n, X_n \right) \geq \phi (x,y) > 0
\end{equation*}
where $\phi(x,y) := \mathbb{E} | \exp(-\frac{x^2y^2}{8} + \frac{xy}{4}\xi) - 1| \in (0,1)$ and $\xi$ is a standard normal. 

Jiang also showed in \cite{Jiang} that relaxing the sense in which the entries of the random matrix should be simultaneously approximable by independent identically distributed Gaussian variables allows a larger collection of entries to be approximated.

Specifically, if $U_n=[u_{ij}]_{i,j=1}^n$ is an orthogonal matrix obtained from performing the Gram-Schmidt procedure on a matrix $Y_n = [y_{ij}]_{i,j=1}^n$ whose elements are independent standard normals, then Jiang proved that the maximum order of $m_n$ such that 
\begin{equation*}
\max_{1 \leq i \leq n, 1 \leq j \leq m_n} |\sqrt{n}u_{ij} - y_{ij}| \to 0
\end{equation*} in probability is $m_n = o(n/ \log n)$. 

In \cite{ChaMec}, Chatterjee and Meckes showed that any subcollection of entries of size $o(n)$, not just those arising as principal submatrices, is approximately Gaussian. More generally, they showed that any projection of Haar measure is close to Gaussian, as long as the projection dimension is $o(n)$.

\begin{theorem}[Chatterjee-Meckes]
Let $A_1, \cdots, A_k$ be $n \times n$ matrices over $\mathbb{R}$ satisfying $tr(A_iA_j^T) = n\delta_{ij}$; that is, $\{ \frac{1}{\sqrt{n}}A_i \}_{1 \leq i \leq k}$ is orthonormal with  respect to the Hilbert-Schmidt inner product. Let $U_n$ be a random matrix uniformly distributed on $\mathcal{O}(n)$, and consider the random vector 
\begin{equation*}
X = (tr(A_1U_n), tr(A_2U_n), \cdots, tr(A_kU_n))
\end{equation*}
in $\mathbb{R}^k$. Let $Z = (Z_1, \cdots, Z_k)$ be a random vector whose components are independent standard normal random variables. Then for $n \geq 2$,
\begin{equation*}
W_1(X,Z) \leq \frac{\sqrt{2}k}{n-1},
\end{equation*}
where $W_1(\cdot, \cdot)$ denotes the $L_1-$Wasserstein distance between distributions.
\end{theorem}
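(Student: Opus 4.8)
The plan is to prove the bound by Stein's method for multivariate normal approximation in the exchangeable-pairs formulation of Chatterjee and Meckes. First I would record that $X$ is already standardized. Since Haar measure on $\mathcal{O}(n)$ satisfies $\mathbb{E}[U_{ab}]=0$ and $\mathbb{E}[U_{ab}U_{cd}]=\tfrac{1}{n}\delta_{ac}\delta_{bd}$, writing $X_i=\mathrm{tr}(A_iU)=\sum_{a,b}(A_i)_{ab}U_{ba}$ gives $\mathbb{E}[X]=0$ and $\mathbb{E}[X_iX_j]=\tfrac{1}{n}\mathrm{tr}(A_iA_j^T)=\delta_{ij}$ by the orthonormality hypothesis, so $\mathbb{E}[XX^T]=I_k$ and the comparison is genuinely to a standard Gaussian vector.

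Next I would construct the exchangeable pair. Let $R=R_{I,J,\theta}$ be the rotation by a small angle $\theta$ in the coordinate plane spanned by $e_I,e_J$, where $(I,J)$ is a uniformly random ordered pair of distinct indices, with orientation randomized so that $R$ and $R^{-1}$ have the same law; set $U'=UR$ and $X'=(\mathrm{tr}(A_1U'),\dots,\mathrm{tr}(A_kU'))$. Right-invariance of Haar measure makes $(U,U')$, and hence $(X,X')$, exchangeable. Writing $M^{(i)}=A_iU$, a direct expansion of $X_i'-X_i=\mathrm{tr}(M^{(i)}(R-I))$ gives
\[
 X_i'-X_i=(\cos\theta-1)\bigl(M^{(i)}_{II}+M^{(i)}_{JJ}\bigr)+\sin\theta\,\bigl(M^{(i)}_{IJ}-M^{(i)}_{JI}\bigr).
\]
Averaging over $(I,J)$ and using $\tfrac{2}{n}\sum_a M^{(i)}_{aa}=\tfrac{2}{n}\mathrm{tr}(A_iU)=\tfrac{2}{n}X_i$ (the off-diagonal part averaging to zero by symmetry) yields the exact linear-regression property $\mathbb{E}[X'-X\mid X]=-\lambda X$ with $\lambda=\tfrac{2(1-\cos\theta)}{n}$; I will send $\theta\to0$ at the end.

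The heart of the matter is the conditional covariance. Multiplying the two increments and averaging over $(I,J)$, the leading contribution (the $\sin^2\theta$ term, since $(\cos\theta-1)^2$ and the cross term contribute at orders $\theta^4$ and $\theta^3$) is
\[
 \mathbb{E}\bigl[(X_i'-X_i)(X_j'-X_j)\mid U\bigr]=\frac{2\sin^2\theta}{n(n-1)}\Bigl(n\delta_{ij}-\mathrm{tr}(A_iUA_jU)\Bigr)+o(\theta^2),
\]
where I used $\mathrm{tr}(A_iUU^TA_j^T)=\mathrm{tr}(A_iA_j^T)=n\delta_{ij}$. Dividing by $2\lambda$ and letting $\theta\to0$ identifies the error matrix
\[
 E_{ij}=\frac{1}{2\lambda}\mathbb{E}\bigl[(X_i'-X_i)(X_j'-X_j)\mid U\bigr]-\delta_{ij}\xrightarrow[\theta\to0]{}\frac{1}{n-1}\bigl(\delta_{ij}-\mathrm{tr}(A_iUA_jU)\bigr).
\]
Because each increment is of order $\theta$ while $\lambda$ is of order $\theta^2$, the third-order remainder $\tfrac{1}{\lambda}\mathbb{E}\sum_i|X_i'-X_i|^3$ is $O(\theta)$ and vanishes in the limit, so the abstract Chatterjee--Meckes bound collapses to $W_1(X,Z)\le\mathbb{E}\|E\|_{\mathrm{H.S.}}$.

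It then remains to estimate this expected Hilbert--Schmidt norm. By concavity of the square root,
\[
 \mathbb{E}\|E\|_{\mathrm{H.S.}}\le\frac{1}{n-1}\sqrt{\mathbb{E}\sum_{i,j=1}^k\bigl(\delta_{ij}-\mathrm{tr}(A_iUA_jU)\bigr)^2},
\]
so the whole problem reduces to the moments of $Y_{ij}:=\mathrm{tr}(A_iUA_jU)$. Using $\mathbb{E}[U_{ab}U_{cd}]=\tfrac{1}{n}\delta_{ac}\delta_{bd}$ one finds $\mathbb{E}[Y_{ij}]=\delta_{ij}$, and the degree-four Haar (Weingarten) calculus for $\mathbb{E}[Y_{ij}^2]$, together with the constraints $\mathrm{tr}(A_iA_j^T)=n\delta_{ij}$, should collapse the double sum to at most $2k^2$, giving $W_1(X,Z)\le\tfrac{\sqrt2\,k}{n-1}$. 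I expect this last step to be the main obstacle: evaluating the fourth-order integrals $\mathbb{E}[\mathrm{tr}(A_iUA_jU)^2]$ and showing that, once the orthonormality constraints are imposed, all dependence on the particular matrices $A_i$ cancels and the sum reduces to the clean constant $2k^2$ is the delicate part, whereas everything upstream is routine once the exchangeable pair and its infinitesimal regression coefficients are in hand.
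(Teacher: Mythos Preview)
The paper does not prove this theorem at all: it is quoted in the introduction as a known result of Chatterjee and Meckes \cite{ChaMec}, used only to motivate the main Theorem~\ref{T:Main}. So there is no ``paper's own proof'' to compare your proposal against.

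That said, your sketch is essentially the argument from the original Chatterjee--Meckes paper: build an exchangeable pair $(U,U')$ by composing $U$ with a uniformly random infinitesimal plane rotation, verify the linear regression condition $\mathbb{E}[X'-X\mid U]=-\lambda X$, compute the conditional second moments to identify the error matrix $E$ with entries $\tfrac{1}{n-1}(\delta_{ij}-\mathrm{tr}(A_iUA_jU))$, observe the third-moment term vanishes as $\theta\to 0$, and finish by bounding $\mathbb{E}\|E\|_{\mathrm{H.S.}}$. Your identification of the final fourth-moment calculation as the delicate step is accurate; in the source this is handled by a direct Weingarten-type evaluation of $\mathbb{E}[\mathrm{tr}(A_iUA_jU)\mathrm{tr}(A_kUA_lU)]$, after which the hypothesis $\mathrm{tr}(A_iA_j^T)=n\delta_{ij}$ indeed collapses the sum. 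Your outline is correct and faithful to the original, but it is not something the present paper undertakes.
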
 

This result, together with the results of Jiang and Diaconis-Freedman, suggest that one should be able to approximate the top left $p_n \times q_n$ block by independent identically distributed Gaussian random variables, in total variation distance, as long as $p_nq_n = o(n)$. The following theorem verifies this conjecture.

\begin{theorem} \label{T:Main}
Let $\{p_n : n \geq 1 \}$ and $\{ q_n : n \geq 1 \}$ be two sequences of positive integers such that $p_nq_n = o(n)$ as $n \to \infty$. For each $n$, let $U_n$ be a random matrix uniformly distributed on the orthogonal group $\mathcal{O}(n)$ and suppose that $W_n$ is the $p_n \times q_n$ upper left block of $U_n$. Let $X_n$ be a $p_n \times q_n$ matrix of independent identically distributed standard Gaussian random variables, then 
\begin{equation*}
\lim_{n \to \infty} d_{TV}\left( \sqrt{n}W_n, X_n \right) = 0.
\end{equation*}
\end{theorem}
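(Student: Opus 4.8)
The plan is to work with the explicit density of $\sqrt n W_n$ and to compare it to the Gaussian density through the likelihood ratio. First I would realize $W_n$ concretely: if $Z$ is an $n\times q_n$ matrix of i.i.d.\ standard Gaussians, split as $Z=\binom{Z_1}{Z_2}$ with $Z_1$ of size $p_n\times q_n$, then the top block of $Z(Z^TZ)^{-1/2}$ has the same law as $W_n$, and one may take the comparison matrix to be $X_n=Z_1$ itself. Carrying out the change of variables (or quoting the matrix-variate Beta distribution) gives that $\sqrt n W_n$ has density $f_n(w)=c_n\det(I_{q_n}-\tfrac1n w^Tw)^{(n-p_n-q_n-1)/2}$ on $\{w^Tw\prec nI_{q_n}\}$ with respect to Lebesgue measure, and the relevant matrix $A:=w^Tw$ is, when evaluated at the Gaussian $X_n$, a Wishart matrix whose eigenvalues drive everything. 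Transposing $U_n$ if necessary, I assume $q_n\le p_n$.

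Writing $\phi$ for the standard Gaussian density on $\mathbb R^{p_n\times q_n}$ and $L_n=f_n/\phi$ for the likelihood ratio, one has $d_{TV}(\sqrt n W_n,X_n)=\tfrac12\,\mathbb E_{X_n}|L_n(X_n)-1|$ and, trivially, $\mathbb E_{X_n}L_n(X_n)=1$. Since $L_n\ge 0$ has constant mean, the generalized Scheff\'e lemma (if $Y_n\ge0$, $Y_n\to Y$ in probability and $\mathbb EY_n\to\mathbb EY<\infty$, then $Y_n\to Y$ in $L^1$) shows it is enough to prove that $L_n(X_n)\to 1$ in probability; the $L^1$ convergence, and hence $d_{TV}\to 0$, then follows for free. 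Thus the whole problem reduces to the concentration statement $\log L_n(X_n)\to 0$.

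To analyze $\log L_n$ I would expand $\tfrac{n-p_n-q_n-1}{2}\operatorname{tr}\log(I-\tfrac1n A)+\tfrac12\operatorname{tr}(A)$, with $A=X_n^TX_n$ of Wishart type and mean $\mathbb E A=p_nI_{q_n}$, \emph{not} in naive powers of $\tfrac1n A$ but around that mean: setting $\Delta=\tfrac1n A-\tfrac{p_n}{n}I$, the term $\operatorname{tr}\log(I-\tfrac1n A)$ becomes $q_n\log(1-\tfrac{p_n}{n})-(1-\tfrac{p_n}{n})^{-1}\operatorname{tr}\Delta-\tfrac12(1-\tfrac{p_n}{n})^{-2}\operatorname{tr}(\Delta^2)-\cdots$, valid on the high-probability event $\{\|\tfrac1n A\|_{\mathrm{op}}\le\delta<1\}$. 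Here I use that $p_nq_n=o(n)$ forces both $p_n=o(n)$ and $q_n=o(n)$, so that $\|\tfrac1n A\|_{\mathrm{op}}\le (\sqrt{p_n}+\sqrt{q_n})^2/n\,(1+o(1))\to 0$ with overwhelming probability, making the complementary event negligible and the logarithmic series convergent. The crucial algebraic feature is a cancellation: the deterministic constants combine with $c_n$, while the first-order fluctuation $\operatorname{tr}\Delta=\tfrac1n\operatorname{tr}A-\tfrac{p_nq_n}{n}$ enters with a coefficient of size $O(1/n)$ (the would-be leading coefficients from the two sources nearly annihilate), so the leading random contribution is genuinely of second order, governed by $\operatorname{tr}(\Delta^2)$ at the scale $p_nq_n/n$.

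The main work—and the main obstacle—is then to show $\operatorname{Var}(\log L_n)\to 0$ and $\mathbb E\log L_n\to 0$. For the variance I would establish that $\operatorname{Var}(\log L_n)=O\big((p_nq_n/n)^2\big)$ up to strictly smaller terms, which vanishes exactly when $p_nq_n=o(n)$; this demands sharp estimates for the fluctuations of the centered traces $\operatorname{tr}(\Delta^k)$ of the Wishart matrix, uniformly over the shape of the block. The delicate regime is the highly skewed one (say $p_n=1$ with $q_n$ nearly linear in $n$), which lies outside the reach of Jiang's $p_n,q_n=o(\sqrt n)$ theorem and where the first-order cancellation above is essential—without it the naive bound degrades to $o(\sqrt n)$. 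For the centering $\mathbb E\log L_n\to 0$ I would use Stirling-type asymptotics for the ratio of multivariate Gamma functions defining $c_n$; this is routine but must be handled with care, since individual higher-order terms in the expansion carry non-negligible means that only cancel against $c_n$. Combining the vanishing variance (Chebyshev) with the vanishing mean yields $\log L_n\to 0$ in probability, and the Scheff\'e reduction closes the argument.
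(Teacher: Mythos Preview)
Your high–level architecture coincides with the paper's: both use Eaton's density for $\sqrt n W_n$, write $d_{TV}=\mathbb E|L_n-1|$ with $L_n=f_n/g_n$, observe $\mathbb E L_n=1$ and reduce by Scheff\'e/uniform integrability to $\log L_n\to 0$ in probability, then split into a deterministic part (handled via Stirling asymptotics for the multivariate Gamma constant) and a random part (handled by Chebyshev using Wishart trace moments). The paper expands $\log(1-\tfrac1nA)$ in powers of $\tfrac1nA$, truncates at a carefully chosen order $l=\lceil \log p/\log(n/pq)\rceil$, and then \emph{regroups} consecutive traces into $h_i=\tfrac1{n^i}\big(\tfrac{p+q+1}{2i}\operatorname{tr}(X^TX)^i-\tfrac1{2(i+1)}\operatorname{tr}(X^TX)^{i+1}\big)$; your centered expansion in $\Delta=\tfrac1nA-\tfrac{p}{n}I$ is a different way of packaging the same cancellations, and either organization ultimately needs the same Wishart covariance estimates.

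There is one concrete slip. The coefficient of $\operatorname{tr}\Delta$ is not $O(1/n)$: writing $B=A-pI_q$ one has
\[
-\frac{n-p-q-1}{2(n-p)}\operatorname{tr}B+\frac12\operatorname{tr}B=\frac{q+1}{2(n-p)}\operatorname{tr}B,
\]
so the coefficient of $\operatorname{tr}\Delta=n^{-1}\operatorname{tr}B$ is $\frac{n(q+1)}{2(n-p)}\sim \tfrac{q+1}{2}$, not $O(1/n)$. This does not kill the argument, since $\operatorname{Var}(\operatorname{tr}\Delta)=2pq/n^2$ and $q\le p$ give a contribution $\asymp q^2\cdot pq/n^2\le(pq/n)^2$, but your stated mechanism (``the would-be leading coefficients nearly annihilate'') is incorrect and the skewed regime is handled not by a vanishing coefficient but by the smallness of $q$.

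The genuine gap is that you defer the hard step. The paper's entire Section~3 is devoted to sharp asymptotics for $\operatorname{Cov}(\operatorname{tr}(X^TX)^h,\operatorname{tr}(X^TX)^k)$, uniformly in $h,k$ growing with $n$, obtained by a delicate combinatorial analysis of $S$-graphs; these are what make the Chebyshev step go through term by term (with a tuned sequence $\epsilon_i$) and what justify the specific truncation level $l$. Your sketch needs an equivalent: either analogous uniform bounds on $\operatorname{Var}(\operatorname{tr}\Delta^k)$ and the cross-covariances, together with a truncation and remainder control, or a summation of the full series with quantitative tail control. Without that input, the claim $\operatorname{Var}(\log L_n)=O((pq/n)^2)$ remains an assertion; supplying it is precisely the content of the paper.
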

This theorem thus unifies Jiang's result and the Diaconis-Freedman result. 

It should be noted that the main theorem presented here also appears in the independent simultaneous work \cite{JiaMa} by T. Jiang and Y. Ma. The approach in \cite{JiaMa} relates the total variation distance to the Kullback-Leibler distance and then shows convergence in the Kullback-Leibler distance.

The approach here works directly with the total variation distance and is an extension of the approach taken in \cite{Jiang}, however the analysis is much more delicate if the only assumption is that $p_nq_n = o(n)$. In particular, the proof requires sharp asymptotics for the covariances of traces of powers of Wishart matrices, for powers growing with the size of the matrix. Bai \cite{Bai} has developed asymptotics for the expected value of traces of powers of Wishart matrices using graph theory and combinatorics. We give an extension of Bai's result, as well as providing sharp asymptotics for the covariances, which may be of independent interest. 

The contents of this paper are as follows. In section 2, we give the proof of the main theorem, making use of new estimates on the asymptotic means and covariances of traces of powers of Wishart matrices. Section 3 contains the proofs of these asymptotics; some technical estimates used in section 2 are relegated to the appendix. 
\section{Proof of the Main Theorem}
\label{sec: Main Proof}
Let $\mu$ and $\nu$ be probability measures on $( \mathbb{R}^m, \mathcal{B})$, where $\mathcal{B}$ is the Borel $\sigma$-algebra. The total variation distance between $\mu$ and $\nu$ is 
\begin{equation*}
d_{TV} \left( \mu, \nu \right) = 2\sup_{A \in \mathcal{B}} |\mu(A) - \nu(A)|. 
\end{equation*}
If $\mu$ and $\nu$ have densities $f(x)$ and $g(x)$ with respect to Lebesgue measure, then 
\begin{equation*}
d_{TV} \left( \mu, \nu \right) = \int_{\mathbb{R}^m} |f(x) - g(x)|dx_1dx_2 \cdots dx_m.
\end{equation*}

Let $f_n(z)$ be the joint density function of $\sqrt{n}W_n$, the $p_n \times q_n$ upper left block of the random orthogonal matrix $\sqrt{n}U_n$. We will assume throughout that $q_n \leq p_n$. Let $X_n$ be a $p_n \times q_n$ matrix of independent identically distributed standard Gaussian random variables and let $g_n(z)$ denote the joint density of the entries of $X_n$. The total variation distance between the entries of $\sqrt{n}W_n$ and those of $X_n$ is
\begin{align*}
d_{TV} \left( \sqrt{n}W_n, X_n \right) & = \int_{\mathbb{R}^{p_nq_n}} \left| f_n(z) - g_n(z) \right| dz \\
& = \int_{\mathbb{R}^{p_nq_n}} \left| \frac{f_n(z)}{g_n(z)} - 1 \right| g_n(z) dz \\
& = \mathbb{E} \left| \frac{f_n(X_n)}{g_n(X_n)} - 1 \right|.
\end{align*}
The following formula for the joint density function $f_n(z)$ of the entries of $W_n$ is due to Eaton \cite{Eaton}.

\begin{theorem}[Eaton]
Let $U_n$ be an $n \times n$ random orthogonal matrix, and let $W_{p,q}$ denote the upper left $p \times q$ subblock of $U_n$. For $q \leq p$ and $p+q \leq n$, with probability one the random matrix $W_{p,q}$ lies in the set $\mathcal{X}$ of $p \times q$ matrices $X$ over $\mathbb{R}$ with the property that all of the eigenvalues of $X^TX$ lie in (0,1), and the density of $W_{p,q}$ with respect to Lebesgue measure on $\mathcal{X}$ is given by 
\begin{equation*}
f(z)= C_1 \det \left( I_q - z^Tz \right)^{\frac{n-p-q-1}{2}}I_0(z^Tz),
\end{equation*}
where the constant $C_1$ is 
\begin{equation*}
C_1 = \left( \sqrt{2 \pi} \right)^{-pq} \frac{\omega (n-p,q)}{\omega (n,q)},
\end{equation*}
with $\omega ( \cdot, \cdot )$ denoting the Wishart constant defined by
\begin{equation*}
\frac{1}{\omega (r, s)} = \pi^{\frac{s(s-1)}{4}} 2^{\frac{rs}{2}} \prod_{j=1}^s \Gamma \left( \frac{r-j+1}{2} \right)
\end{equation*}
and $I_0(z^Tz)$ is the indicator that all the eigenvalues of $z^Tz$ lie in (0,1). 
Here $s$ is a positive integer and $r$ is a real number, $r > s-1$. 
\end{theorem}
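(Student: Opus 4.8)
The plan is to realize $W_{p,q}$ as the top block of a uniformly random orthonormal $q$-frame and to read off its marginal density via a matrix change of variables. Since the first $q$ columns of a Haar-distributed $U_n$ are uniformly distributed on the Stiefel manifold $V_{n,q} = \{Q \in \mathbb{R}^{n \times q} : Q^TQ = I_q\}$, I would first build such a frame from Gaussians: if $M$ is an $n \times q$ matrix of i.i.d. standard normals, then its polar factor $Q = M(M^TM)^{-1/2}$ is invariant under left multiplication by elements of $\mathcal{O}(n)$, hence uniform on $V_{n,q}$, and is independent of the Wishart matrix $M^TM$. Partitioning $M = \begin{pmatrix} M_1 \\ M_2 \end{pmatrix}$ with $M_1$ of size $p \times q$ and $M_2$ of size $(n-p) \times q$, the upper-left $p\times q$ block of $Q$ is $M_1(M^TM)^{-1/2}$, so it suffices to find the density of $Z := M_1(M^TM)^{-1/2}$.

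Next I would pass to the variables $(Z, S, P)$, where $S = M^TM = M_1^TM_1 + M_2^TM_2 \succ 0$ is the $q \times q$ Wishart matrix and $P = M_2(M_2^TM_2)^{-1/2} \in V_{n-p,q}$ is the orthonormal factor of the bottom block. This is a bijection off a null set: $M_1 = ZS^{1/2}$ is recovered from $(Z,S)$, while $M_2^TM_2 = S^{1/2}(I_q - Z^TZ)S^{1/2}$ is then determined and $M_2 = P(M_2^TM_2)^{1/2}$. The requirement $M_2^TM_2 \succ 0$ (valid a.s. since $p+q\le n$) forces $I_q - Z^TZ \succ 0$, and $M_1$ having full column rank a.s. (using $p \ge q$) forces $Z^TZ \succ 0$; thus all eigenvalues of $Z^TZ$ lie in $(0,1)$, which identifies the support $\mathcal{X}$. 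The point of this parametrization is twofold: the Gaussian weight $\exp(-\frac12\operatorname{tr}(M^TM)) = \exp(-\frac12\operatorname{tr}(S))$ depends only on $S$, and the dimensions match, $pq + q(q+1)/2 + \dim V_{n-p,q} = nq$.

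The heart of the argument is the Jacobian of this change of variables. Using the exterior-algebra calculus of matrix differentials for the polar-type factorizations $M_1 = ZS^{1/2}$ and $M_2 = P(\cdots)^{1/2}$, I expect the volume element to separate as (a constant from integrating $P$ out over $V_{n-p,q}$) times (a suitable power of $\det S$) times $\det(I_q - Z^TZ)^{(n-p-q-1)/2}$, with the $Z$-dependence decoupling from both $S$ and $P$. Integrating the Gaussian weight $\exp(-\frac12\operatorname{tr}(S))$ against the resulting power of $\det S$ over the positive-definite cone then gives a multivariate Gamma integral, and the combination of that integral, the Stiefel volume $\mathrm{Vol}(V_{n-p,q})$, and the factor $(2\pi)^{-nq/2}$ is exactly what produces the constant $C_1 = (\sqrt{2\pi})^{-pq}\,\omega(n-p,q)/\omega(n,q)$, while $\det(I_q - Z^TZ)^{(n-p-q-1)/2}$ survives as the stated $z$-dependence. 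The main obstacle is precisely this Jacobian bookkeeping: getting the exponent $(n-p-q-1)/2$ exactly right and tracking every constant through the two factorizations and the $S$-integration. As a cross-check and alternative route, one can instead disintegrate the uniform measure on $V_{n,q}$ directly over the value $z$ of the top block via the coarea formula; the fiber is the scaled Stiefel manifold $\{Y \in \mathbb{R}^{(n-p)\times q} : Y^TY = I_q - z^Tz\}$, whose surface volume scales as $\det(I_q - z^Tz)^{(n-p-q-1)/2}$, recovering the same density with the overall normalization fixed by $\int f = 1$.
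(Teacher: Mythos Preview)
The paper does not prove this theorem at all: it is quoted as a result ``due to Eaton'' and used as a black box to write down the density of $\sqrt{n}W_n$. So there is no proof in the paper to compare your proposal against.

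That said, your outline is a sound way to derive Eaton's formula and is essentially the classical route (as in Eaton's monograph or Muirhead's text). The identification of $W_{p,q}$ with the top block of a uniform Stiefel frame, the Gaussian realization $Q=M(M^TM)^{-1/2}$, the support analysis giving $0\prec Z^TZ\prec I_q$, and the dimension count $pq+q(q+1)/2+\dim V_{n-p,q}=nq$ are all correct. The one place where more care is needed is the Jacobian: your parametrization $(Z,S,P)$ has $M_1=ZS^{1/2}$ and $M_2=P\bigl(S^{1/2}(I_q-Z^TZ)S^{1/2}\bigr)^{1/2}$, so the map $M_2\mapsto(S,P)$ is \emph{not} simply the polar factorization of $M_2$ alone (it depends on $Z$ through $M_2^TM_2$), and you should be explicit that the exterior-form computation still separates cleanly. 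It does, and the $Z$-factor that emerges is $\det(I_q-Z^TZ)^{(n-p-q-1)/2}$ via the Jacobian of $Y\mapsto P(Y^TY)^{1/2}$ on $(n-p)\times q$ matrices, but you should state this rather than only ``expect'' it. Your coarea cross-check is also valid; the normalization there comes from $\mathrm{Vol}(V_{n,q})$ and $\mathrm{Vol}(V_{n-p,q})$, which after simplification give exactly the ratio $\omega(n-p,q)/\omega(n,q)$ together with the $(2\pi)^{-pq/2}$.
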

If follows that the density function of $\sqrt{n}W_n$ is
\begin{equation*}
f_n(z) = \left( \sqrt{2\pi n} \right)^{-pq} \frac{\omega (n-p,q)}{\omega (n,q)} \left[ \det \left( I_q - \frac{z^Tz}{n} \right)^{\frac{n-p-q-1}{2}} \right] I_0\left( \frac{z^Tz}{n} \right),
\end{equation*} 
where for notational convenience we use $p$ and $q$ in place of $p_n$ and $q_n$. The joint density function of $p_nq_n$ independent standard Gaussian random variables is
\begin{equation*}
g_n(z) = \left( \sqrt{2 \pi} \right)^{-pq} \exp \left( \frac{-tr(z^Tz)}{2} \right),
\end{equation*}
where $z$ is a $p_n$ by $q_n$ matrix. Let $\lambda_1, \cdots, \lambda_q$ be the eigenvalues of $X_n^TX_n$. Then the ratio $\frac{f_n(X_n)}{g_n(X_n)}$ can be written as a product of a constant  part $K_n$ and a random part $L_n$, where
\begin{align*}
& K_n = \left( \frac{2}{n} \right)^{\frac{pq}{2}} \prod_{j=1}^q \frac{\Gamma ((n-j+1)/2)}{\Gamma ((n-p-j+1)/2)} ; \\
& L_n = \left[ \prod_{i=1}^q \left (1-\frac{\lambda_i}{n} \right) \right] ^{\frac{n-p-q-1}{2}} \exp \left( \frac{1}{2} \sum_{i=1}^q \lambda_i \right) 
\end{align*}
if all the $\lambda_i$ are in $(0,n)$ and $L_n$ is zero otherwise.
Then 
\begin{equation*}
d_{TV} \left( \sqrt{n}W_n, X_n \right) = \mathbb{E} \left| \frac{f_n(X_n)}{g_n(X_n)} -1 \right| = \mathbb{E} |K_n \cdot L_n - 1|.
\end{equation*}
Note that $K_nL_n \geq 0$ and $\mathbb{E}[K_nL_n] = \int_{\mathbb{R}^{p_nq_n}} f_n(x) dx =1$. It is a standard exercise that these facts, together with the convergence \emph{in probability} of $K_nL_n$, suffice to show that the $\{ K_nL_n \}$ are uniformly integrable, which in turn gives the required convergence in expectation. 

Define a function  $F(x)$ by $F(x) = \frac{x}{2} + \frac{n-p-q-1}{2} \log(1- \frac{x}{n})$ if $0 \leq x <n$ and $F(x) = -\infty$ otherwise. Then $L_n = \exp (\sum_{i=1}^q F(\lambda_i))$, and showing that $K_nL_n \xrightarrow{\mathbb{P}} 1$ as $n \to \infty$ is equivalent to showing that
\begin{equation*}
 \log(K_n)+\log(L_n) = \log (K_n)+\left \{ \sum_{i=1}^q F(\lambda_i) \right \} \xrightarrow{\mathbb{P}} 0.
\end{equation*}
Take $l$ to be the smallest odd integer with $l \geq \frac{\log p}{\log( \frac{n}{pq})}$. Then using Taylor's Theorem to expand $\log(1 - \frac{x}{n})$ up to order $l$, for any $x \in (0,n)$, 
\begin{equation*}\begin{split}
\log \left( 1 - \frac{x}{n} \right) & = - \frac{x}{n} - \frac{x^2}{2n^2} - \cdots - \frac{x^l}{ln^l} - \frac{x^{l+1}}{(l+1)(\xi_x - n)^{l+1}},
\end{split}\end{equation*}
where $\xi_x \in (0,x)$. Then
\begin{equation*}\begin{split}
F(x) & = \frac{p+q+1}{2n}x - \frac{n-p-q-1}{4n^2} x^2 - \cdots - \frac{n-p-q-1}{2ln^l}x^l + \frac{a_n(x)}{n^l}x^{l+1}
\end{split}\end{equation*}
where $a_n(x) = \frac{-n^l(n-p-q-1)}{2(l+1)(\xi_x-n)^{l+1}}$. Since $X_n$ is a $p_n \times q_n$ matrix of independent standard Gaussian random variables, it follows from Lemma \ref{L: maxew} in the appendix that, with probability 1,
\begin{equation*}
\limsup \max_{1 \leq i \leq q_n} \frac{\lambda_i}{p_n} \leq 4.
\end{equation*} Fix $\epsilon > 0$ and define $\Omega_n = \left\{ \max_{1\leq i \leq q_n} \frac{\lambda_i}{p_n} \leq 4 + \epsilon  \right\}$, so that $\lim_{n \to \infty}\mathbb{P}(\Omega_n^C) = 0$. Then, on $\Omega_n$, 
\begin{align*}
 \log(L_n) & = \sum_{i=1}^q F(\lambda_i) \\
& = \sum_{i=1}^q \bigg[ \frac{p+q+1}{2n}\lambda_i - \frac{n-p-q-1}{4n^2} \lambda_i^2 \\
& \qquad - \cdots - \frac{n-p-q-1}{2ln^l}\lambda_i^l + \frac{a_n(\lambda_i)}{n^l}\lambda_i^{l+1} \bigg] \\
&  = \frac{(p+q+1)}{2n}tr(X^TX)  - \frac{(n-p-q-1)}{4n^2}tr(X^TX)^2 \\
& \qquad - \cdots - \frac{(n-p-q-1)}{2ln^{l}}tr(X^TX)^{l} +\sum_{i=1}^q \frac{a_n(\lambda_i) \lambda_i^{l+1}}{n^l} \\
& = \frac{1}{n} \left[ \frac{p+q+1}{2} tr(X^TX) - \frac{1}{4} tr(X^TX)^2 \right]  \\
& \qquad + \frac{1}{n^2} \left[ \frac{p+q+1}{4}tr(X^TX)^2 - \frac{1}{6}tr(X^TX)^3 \right] \\
& \qquad + \frac{1}{n^3} \left[ \frac{p+q+1}{6}tr(X^TX)^3 - \frac{1}{8}tr(X^TX)^4 \right] \\
& \qquad + \cdots  + \frac{p+q+1}{2ln^l}tr(X^TX)^l + \sum_{i=1}^q \frac{a_n(\lambda_i) \lambda_i^{l+1}}{n^l}.
\end{align*}
 Notice that 
\begin{equation*}\begin{split}
\bigg| \sum_{i=1}^q \frac{a_n(\lambda_i) \lambda_i^{l+1}}{n^l} \bigg| & \leq \sum_{i=1}^q \frac{|a_n(\lambda_i)|}{n^l} \lambda_i^{l+1} = \sum_{i=1}^q \frac{n^l (n-p-q-1)}{2(l+1)(\xi_i -n)^{l+1} n^l} \lambda_i^{l+1}.
\end{split}\end{equation*}
On $\Omega_n, |\lambda_i| \leq (4+\epsilon)p_n$. Then $|\xi_i| \leq (4+ \epsilon)p_n$ and $p_n = o(n)$, so that
\begin{equation*} \begin{split}
\bigg| \sum_{i=1}^q \frac{a_n(\lambda_i) \lambda_i^{l+1}}{n^l} \bigg| & \leq \sum_{i=1}^q \frac{n^l(n-p-q-1)}{2(l+1)n^l(n - (4+\epsilon)p_n)^{l+1}} \lambda_i^{l+1} \leq \frac{ \widetilde{a_n}tr(X^TX)^{l+1}}{n^l}
\end{split} \end{equation*}
 where $\widetilde{a_n}= \frac{n^l(n-p-q-1)}{2(l+1)(n-(4+\epsilon)p_n)^{l+1}}$ is bounded, independent of $n$. Define 
\begin{equation*}
h_i = \frac{1}{n^i} \left[ \frac{p+q+1}{2i}tr(X^TX)^i - \frac{1}{2(i+1)}tr(X^TX)^{i+1} \right]
\end{equation*}
when $i<l$ and $h_l = \frac{1}{n^l} [\frac{p+q+1}{2l}tr(X^TX)^l]$. Let $E_i = \mathbb{E}[h_i]$ and $R_i = h_i - E_i$. Finally, let $A= \frac{\widetilde{a_n}tr(X^TX)^{l+1}}{n^l}$, so 
\begin{equation*} \sum_{i=1}^q F(\lambda_i) = \sum_{i=1}^l R_i + \sum_{i=1}^l E_i + A;
\end{equation*}
 the goal is to show that 
\begin{equation*}
\left( \log(K_n)+\sum_{i=1}^l R_i + \sum_{i=1}^l E_i + A \right) \xrightarrow{\mathbb{P}} 0.
\end{equation*}
By Lemma \ref{L: Cancel} in the appendix, 
\begin{equation*}
\lim_{n \to \infty} \left( \log(K_n) + \sum_{i=1}^{l} E_i \right) = 0.
\end{equation*}   
To show $\left( \log(K_n) + \sum_{i=1}^q F(\lambda_i)\right) \xrightarrow{\mathbb{P}} 0$, it thus suffices to show that 
\begin{equation*}
\left( \sum_{i=1}^l R_i + A \right) \to 0
\end{equation*}
in probability as $n \to \infty$. Note that if $p=p_n$ is bounded and independent of $n$, then $l = 1$ for $n$ large enough. The only terms are then $R_1 =  \frac{1}{n} [\frac{p+q+1}{2}tr(X^TX)]$ and $A$.

First considering the sum of the $R_i$, fix $\epsilon > 0$ and recall $l$ is the smallest odd integer such that $l \geq \frac{\log p}{\log( \frac{n}{pq})}$. Define $\epsilon_i := \frac{\epsilon}{f(i)Z_l}$ where $f(i) = \big( \frac{n}{8e^3pq} \big)^{i-1}$ and $Z_l = \sum_{i=1}^l \frac{1}{f(i)}$, so that $\sum_{i=1}^l \epsilon_i = \epsilon$. Note that 
\begin{equation*}\begin{split}
\lim_{n \to \infty}Z_l & = \lim_{n \to \infty} \sum_{i=1}^l \left( \frac{8e^3pq}{n} \right)^{i-1} \\
& = \lim_{n \to \infty} \frac{ 1- \left( \frac{8e^3pq}{n} \right)^l}{1- \frac{8e^3pq}{n}} \\
& = \lim_{n \to \infty} \frac{1- \left( \frac{8e^3pq}{n} \right)^{\frac{\log p}{\log(\frac{n}{pq})}}}{1- \frac{8e^3pq}{n}} \\
& = \lim_{n \to \infty} \frac{1- e^{\frac{\log p}{\log(\frac{n}{pq})}\log \left( \frac{8e^3pq}{n} \right)}}{1- \frac{8e^3pq}{n}} \\
& = \lim_{n \to \infty} \frac{1- e^{\log p \left( -1 + \frac{\log 8e^3}{\log \left( \frac{n}{pq} \right)} \right) }}{1- \frac{8e^3pq}{n}} \\
& = 1,
\end{split}\end{equation*}
since $\frac{\log 8e^3}{\log \left( \frac{n}{pq} \right)} < \frac{1}{2}$ for $n$ large enough.
It follows from Chebychev's Inequality that
\begin{align*}
\mathbb{P} \left[  \bigg| \sum_{i=1}^{l-1} R_i \bigg| \geq \frac{\epsilon}{2} \right] & \leq \sum_{i=1}^{l-1} \mathbb{P} \left[  |R_i| \geq \frac{\epsilon_i}{2} \right] \leq 4\sum_{i=1}^{l-1} \frac{ Var[R_i]}{\epsilon_i^2} = 4\sum_{i=1}^{l-1} \frac{ Var[h_i]}{\epsilon_i^2} .
\end{align*}

Recall that the sum of the $R_i$ only occurs in the case that $p_n \to \infty$ as $n \to \infty$, in which case Lemma \ref{L: cov2} provides an explicit formula for the covariances. The variance $Var[R_i] = Var[h_i]$ of the individual terms is thus computed as follows,

\begin{align*}
Var &  \bigg[ \frac{p+q+1}{2in^i}tr(X^TX)^i - \frac{1}{2(i+1)n^i}tr(X^TX)^{i+1} \bigg]\\
& = \frac{(p+q+1)^2}{4i^2n^{2i}} Var[tr(X^TX)^i] + \frac{1}{4(i+1)^2n^{2i}}Var[tr(X^TX)^{i+1}]\\
& \qquad - \frac{2(p+q+1)}{4i(i+1)n^{2i}} Cov( tr(X^TX)^i, tr(X^TX)^{i+1})\\
& = \tfrac{(p+q+1)^2}{4i^2n^{2i}} \left( 2i^2((p)_i(p)_{i-1}q+p(q)_i(q)_{i+1}) + Ce_{i,i}+ Df_{i,i} \right) \\
& \qquad + \tfrac{1}{4(i+1)^2n^{2i}}\bigg( 2(i+1)^2((p)_{i+1}(p)_iq+p(q)_{i+1}(q)_i) \\
& \qquad + Ce_{i+1,i+1} + Df_{i+1,i+1} \bigg) \\
& \qquad - \tfrac{2(p+q+1)}{4i(i+1)n^{2i}} \left( 2i(i+1)((p)_i(p)_iq+p(q)_i(q)_i) + Ce_{i,i+1}+ Df_{i,i+1} \right)\\
& = \tfrac{(p)_{i-1}(p)_iq \left(q^2 - p + i^2 + i +2qi - 1 \right) + p(q)_{i-1}(q)_i \left( p^2 - q + i^2 + i + 2pi -1 \right) }{2n^{2i}} \\
& \qquad +  \tfrac{(p+q+1)^2}{4i^2n^{2i}} \left( Ce_{i,i}+ Df_{i,i} \right) + \tfrac{1}{4(i+1)^2n^{2i}}\left( Ce_{i+1,i+1} + Df_{i+1,i+1} \right) \\
& \qquad - \tfrac{2(p+q+1)}{4i(i+1)n^{2i}} \left( Ce_{i,i+1}+ Df_{i,i+1} \right) \\
& \leq \tfrac{p^{2i-1}q \left(q^2 - p + i^2 + i +2qi - 1 \right) + pq^{2i-1} \left( p^2 - q + i^2 + i + 2pi -1 \right) }{2n^{2i}} \\
& \qquad +  \tfrac{(p+q+1)^2}{4i^2n^{2i}} \left( Ce_{i,i}+ Df_{i,i} \right) + \tfrac{1}{4(i+1)^2n^{2i}}\left( Ce_{i+1,i+1} + Df_{i+1,i+1} \right) \\
& \qquad - \tfrac{2(p+q+1)}{4i(i+1)n^{2i}} \left( Ce_{i,i+1}+ Df_{i,i+1} \right),
\end{align*}
where
\begin{align*}
e_{i,i} & = o(ip^{2i-2}q^24^{2i}) \\
f_{i,i} & = o(i^8p^{2i-2}q^24^{2i}) \\
e_{i+1,i+1} & = o((i+1)p^{2i}q^24^{2i+2}) \\
f_{i+1,i+1} & = o((i+1)^8p^{2i}q^24^{2i+2}) \\
e_{i,i+1} & = o((i+1)p^{2i-1}q^24^{2i+1}) \\
f_{i,i+1} & = o((i+1)^8p^{2i-1}q^24^{2i+1}).
\end{align*}
Since $i \geq 1$, $(i+1)^6 \leq e^{6i}$. Thus
\begin{align*}
Var &  \bigg[ \frac{p+q+1}{2in^i}tr(X^TX)^i - \frac{1}{2(i+1)n^i}tr(X^TX)^{i+1} \bigg]\\
& \leq \frac{\widetilde{C}p^{2i}q^{2}4^{2i}e^{6i}}{in^{2i}}.
\end{align*}
Now, 
\begin{equation*} \begin{split} 
\sum_{i=1}^{l-1} \frac{(4e^3)^{2i}p^{2i}q^2}{in^{2i}}\frac{1}{\epsilon_i^2} & = \frac{1}{\epsilon^2} \sum_{i=1}^{l-1} \frac{ (4e^3p)^{2i}q^2Z_l^2n^{2i-2}}{in^{2i}(8e^3pq)^{2i-2}} \\
& = \frac{1}{\epsilon^2} \sum_{i=1}^{l-1} \frac{ (4e^3p)^{2}q^2Z_l^2}{in^{2}(2q)^{2i-2}}\\
& = \frac{16e^6Z_l^2}{\epsilon^2} \left( \frac{pq}{n} \right)^2 \sum_{i=1}^{l-1} \frac{1}{i(2q)^{2i-2}} \\
& \to 0
\end{split} \end{equation*}
as $n \to \infty$.
Therefore $\sum_{i=1}^{l-1} \frac{Var[h_i]}{\epsilon_i^2} \to 0$. 

The regrouping of the terms of $\log (L_n)$ terminates with 
\begin{equation*} h_l = \tfrac{(p+q+1)tr(X^TX)^l}{2ln^l}.\end{equation*}  The probability that $h_l \geq \frac{\epsilon}{4}$ is given by

\begin{equation*} \begin{split} 
\mathbb{P} & \left[ \frac{p+q+1}{2ln^l}tr(X^TX)^l \geq \frac{\epsilon}{4} \right] \\
& \qquad \leq \frac{4(p+q+1)^2}{\epsilon^2l^2n^{2l}}Var[tr(X^TX)^l] \\
& \qquad = \frac{4(p+q+1)^2}{\epsilon^2l^2n^{2l}} \left( 2l^2((p)_l(p)_{l-1}q + p(q)_{l-1}(q)_l) + Ce_{l,l} +Df_{l,l} \right)\\
& \qquad \leq \frac{4(p+q+1)^2}{\epsilon^2l^2n^{2l}} \left( 2l^2p^{2l-1}q + 2l^2pq^{2l-1} + \widetilde{C}p^{2l-2}q^24^{2l}l^8 \right).
\end{split} \end{equation*}
Now $l^6 \leq p$ for large enough $n$. Therefore, for large $n$,
\begin{equation*} \begin{split}
\mathbb{P}\left[ \frac{p+q+1}{2ln^l}tr(X^TX)^l \geq \frac{\epsilon}{4} \right] & \leq \frac{\widetilde{C}p^{2l+1}q^24^{2l}}{n^{2l}}.
\end{split} \end{equation*}
Now
\begin{equation} \begin{split} \label{eq: 2}
\frac{p^{2l+1}q^24^{2l}}{n^{2l}}& \leq \exp \left[ -2l \log \left( \frac{n}{pq} \right) + \log p + (2l) \log 4 \right] \\
& \leq \exp \left[ -2\frac{\log p}{\log( \frac{n}{pq} )}\log \left( \frac{n}{pq} \right) + \log p +2\frac{\log p}{\log( \frac{n}{pq} )}\log 4 \right] \\
& = \exp \left[ - \log p +  2\frac{\log p}{\log( \frac{n}{pq} )}\log 4 \right] \\
&  \leq \exp \left[ -\frac{1}{2} \log p \right] \\
& \to 0
\end{split} \end{equation}
as $n \to \infty$ since $\frac{2\log 4}{\log ( \frac{n}{pq})} \leq \frac{1}{2}$ eventually.

Finally, we check the convergence in probability of the error term $A = \frac{\widetilde{a_n}tr(X^TX)^{l+1}}{n^l}$. By Lemma \ref{L: cov2},
\begin{align*}
\mathbb{P} & \left[ A \geq \frac{\epsilon}{4} \right]  = \mathbb{P} \left[ \widetilde{a_n} \frac{tr(X^TX)^{l+1}}{n^l} \geq \frac{\epsilon}{4} \right] \\
& \leq \frac{16\widetilde{a_n}^2}{\epsilon^2n^{2l}}Var[tr(X^TX)^{l+1}] \\
& = \frac{16\widetilde{a_n}^2}{\epsilon^2n^{2l}} \left( 2(l+1)^2((p)_{l+1}(p)_lq + p(q)_l(q)_{l+1})  +Ce_{l+1,l+1} + Df_{l+1,l+1} \right) \\
& \leq \frac{\widetilde{C}p^{2l+1}q^24^{2l}}{n^{2l}}.
\end{align*}

As above, the choice of $l$ then guarantees that $\mathbb{P}[ A \geq \frac{\epsilon}{4}] \to 0$. 

\section{Combinatorics of Wishart Matrices}
\label{sec: Wishart}
The following result is a slight extension of a result in \cite{Bai} on means of traces of powers of Wishart matrices. The majority of the proof is the same as the one in \cite{Bai}. However, somewhat more careful estimates of the error are needed to complete the proof of the main theorem. 

\begin{lemma} \label{L: exp}
Let $ \{ p_n : n \geq 1 \}$ and $ \{q_n : n \geq 1 \}$ be two sequences of positive integers such that  $q_n \leq p_n$. For each n, let $X_n = (x_{ij})$ be a $p_n \times q_n$ matrix where $x_{ij}$ are independent standard Gaussian random variables. Then for each integer $h \geq 1$, 
\begin{equation*}
\mathbb{E}[tr(X_n^TX_n)^h]=\bigg( \sum_{r=0}^{h-1} (p_n)_{h-r}(q_n)_{r+1}\tfrac{1}{r+1}\tbinom{h}{r} \tbinom{h-1}{r} \bigg) \bigg( 1 + O\bigg( \tfrac{h}{p_n-h} \bigg) \bigg),
\end{equation*}
where $(p_n)_{h-r}$ and $(q_n)_{r+1}$ are falling factorials, which are defined by
\begin{equation*}
(x)_a = x (x-1)(x-2) \cdots (x-a+1).
\end{equation*}
\end{lemma}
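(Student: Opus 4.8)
The plan is to follow the combinatorial, graph-theoretic method of \cite{Bai}, expanding the trace into a sum over closed walks in a bipartite graph and tracking contributions by the combinatorial type of the walk; the one place where I must be sharper than \cite{Bai} is in making the error estimate uniform in $h$, which here is allowed to grow with $n$. First I would expand
\[
\mathrm{tr}(X_n^TX_n)^h = \sum x_{i_1 j_1}x_{i_1 j_2}x_{i_2 j_2}\cdots x_{i_h j_h}x_{i_h j_1},
\]
where the sum runs over all row indices $i_1,\dots,i_h \in \{1,\dots,p_n\}$ and column indices $j_1,\dots,j_h \in \{1,\dots,q_n\}$. Each summand is a product of $2h$ entries of $X_n$ and encodes a closed walk $j_1 - i_1 - j_2 - i_2 - \cdots - j_h - i_h - j_1$ that alternates between ``row'' and ``column'' vertices in the complete bipartite graph on $\{1,\dots,p_n\}\sqcup\{1,\dots,q_n\}$. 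Taking expectations and using independence together with the Gaussian moment formula, a summand contributes $0$ unless every distinct entry $x_{ij}$ appears an even number of times; when the edge $(i,j)$ is traversed $2m$ times its factor is the double factorial $(2m-1)!!$. Thus $\mathbb{E}[\mathrm{tr}(X_n^TX_n)^h]$ is a sum over even closed bipartite walks, organized by the isomorphism class (the \emph{shape}) of the underlying graph $G$ and the way the walk traverses it.

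Next I would isolate the dominant shapes. The graph $G$ traced out by such a walk is connected, and a closed walk of length $2h$ in which every edge is used an even number of times has at most $h$ distinct edges and hence at most $h+1$ distinct vertices, with equality exactly when $G$ is a tree each of whose edges is traversed precisely twice --- a \emph{double tree}. For a double tree every edge has multiplicity two, so each Gaussian factor equals $(2\cdot 1-1)!! = 1$. Since $G$ is bipartite with parts given by the distinct row and column indices, a double tree with $h-r$ rows and $r+1$ columns has $(h-r)+(r+1)-1 = h$ edges, as required, and the number of labelings is $(p_n)_{h-r}(q_n)_{r+1}$. The number of combinatorial shapes giving a valid alternating closed walk with these vertex counts is the Narayana number $\tfrac{1}{r+1}\binom{h}{r}\binom{h-1}{r}$, the same non-crossing count that governs the Marchenko--Pastur moments (one checks directly, e.g.\ for small $h$, that this equals $N(h,r+1)$). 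Summing over $r$ from $0$ to $h-1$ produces exactly the main term in the statement; terms with $r+1 > q_n$ vanish automatically since then $(q_n)_{r+1}=0$.

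The main obstacle is the error estimate, which must be made uniform as $h=h_n$ grows. Every walk that is not a double tree spans a graph with at most $h$ distinct vertices, because it either contains an independent cycle or traverses some edge more than twice, so it carries at least one fewer free index than a double tree. The key point is that each such walk can be matched with a double tree carrying one additional free \emph{row} index: whether the defect pinches two rows or two columns, comparison against the main term with the same column count and one extra row produces the falling-factorial ratio $(p_n)_{a}/(p_n)_{a+1} = 1/(p_n-a) \le 1/(p_n-h)$, supplying the small factor. I would then bound the total defective contribution by (i) counting the single-defect shapes, of which there are $O(h)$, and checking that higher-order defects are negligible; (ii) controlling the double-factorial weights $\prod_e (2m_e-1)!!$ coming from edges of multiplicity greater than two; and (iii) tracking the explicit $h$-dependence of the shape counts and weights throughout --- precisely the step where the present estimate must sharpen \cite{Bai}. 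Combining these, the defective walks contribute a relative error of $O\!\left(\tfrac{h}{p_n-h}\right)$, which yields the stated multiplicative factor and completes the proof.
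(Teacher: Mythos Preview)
Your approach is essentially the same as the paper's: expand the trace as a sum over closed bipartite walks, identify the main term as the contribution from ``double trees'' counted by the Narayana numbers $\tfrac{1}{r+1}\binom{h}{r}\binom{h-1}{r}$ with $(p_n)_{h-r}(q_n)_{r+1}$ labelings, and bound the remainder by comparison with the main term. The one place where the paper is more concrete than your sketch is the error estimate: rather than organizing by ``single defects'' versus ``higher-order defects,'' the paper groups all non-tree walks by the number $m<h$ of distinct edges, bounds the number of such walks crudely by $m^{h-m}$ times the corresponding tree count on $m+1$ vertices, and then shows the ratio to the main term is at most $(h/(p_n-h))^{h-m}$; summing this geometric series over $m$ gives $O(h/(p_n-2h))$ directly, which handles your steps (i)--(iii) in one stroke and makes the uniformity in $h$ transparent.
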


\begin{proof}
 Write $(tr(X_n^TX_n)^h)$ as 
\begin{equation*} 
\sum_{1 \leq i_1,...,i_h \leq p} \sum_{1 \leq j_1,...,j_h \leq q} x_{i_1j_1}x_{i_1j_2}x_{i_2j_2} \cdots x_{i_{h-1}j_h}x_{i_hj_h} x_{i_hj_1}  = \sum_G x_G,
 \end{equation*}
where $G$ is a bipartite graph with the $i_k$ on a top line and the $j_k$ on a bottom line, with $h$ up-edges from $j_k$ to $i_k$ and $h$ down-edges from $i_k$ to $j_{k+1}$. We refer to such a graph as an S-graph. An edge $(i,j)$ in the S-graph corresponds to the variable $x_{ij}$. Now,
\begin{align*}
\mathbb{E}[tr(X_n^TX_n)^h] & =  \sum_{G} \mathbb{E}[X_G],
\end{align*}
where the sum is taken over all S-graphs $G$. If $G$ contains any edges of odd multiplicity then $\mathbb{E}[X_G]=0$ so that the proof reduces to the case where $G$ contains only edges of even multiplicity. Each S-graph $G$ contains $2h$ edges, hence at most $h$ distinct edges. It then follows that $G$ has at most $h+1$ distinct vertices. First consider the case when $G$ contains exactly $h$ distinct edges and thus $h+1$ distinct vertices. For each $r = 0, \cdots, h-1$ the calculation reduces to counting the number of graphs which have no single edges, $r+1$ non-coincident $j$-vertices and $h-r$ non-coincident $i$-vertices. 

Consider two S-graphs to be isomorphic if one can be converted to the other by permuting $\{ 1,...,p \}$ on the top line and $\{1,...,q \}$ on the bottom line. To count the number of isomorphism classes define $u_l = -1$ if the graph leaves a bottom vertex for the final time after the $l$th up edge and $u_l=0$ otherwise. Define $d_l = 1$ if the $l$th down edge leads to a new bottom vertex and $d_l = 0$ otherwise. The graph must return to the initial bottom vertex so $u_1 = 0$. Because the number of vertices seen for the final time cannot exceed the number of new vertices, we have $d_1 + \cdots + d_{l-1} + u_1 + \cdots + u_l \geq 0$ for every $l$.

There are $\binom{h}{r}$ ways to arrange $r$ ones into the $h$ positions of down edges that could lead to a new bottom vertex. There are $\binom{h-1}{r}$ ways to arrange $r$ minus ones into the $h-1$ positions of up edges that leave a bottom vertex for the last time. ($h-1$ since the first vertex can never be left for the last time.) Thus we have $\binom{h}{r} \binom{h-1}{r}$ ways to arrange our $d$-sequence and $u$-sequence. 

However, not all of these $\binom{h}{r} \binom{h-1}{r}$ graphs are a proper S-graph. An S-graph is improper if at at some point $d_1 + \cdots + d_{l-1} + u_1 + \cdots + u_l < 0$. To count the number of improper graphs, let $L$ be the first integer at which this happens. Then we must have $d_{L-1} = 0$ and $u_L = -1$. That is, we have just returned to a vertex we have seen before and left it for the last time. To fix it we must instead see a new vertex that we will return to again later. To do this, change $d_{L-1}$ to 1 and $u_L$ to 0. The initial bad sequences contained $r$ ones and $r$ minus ones. The fixed sequences now contain $r+1$ ones and $r-1$ minus ones. Therefore we have $\binom{h}{r+1} \binom{h-1}{r-1}$ bad sequences. Thus the number of isomorphism classes is $\binom{h}{r} \binom{h-1}{r} - \binom{h}{r+1} \binom{h-1}{r-1} = \frac{1}{r+1} \binom{h}{r} \binom{h-1}{r}$. The number of graphs in each isomorphism class is $p(p-1) \cdots (p-h+r+1)q(q-1)\cdots(q-r) = (p)_{h-r}(q)_{r+1}$. The number of S-graphs with exactly $h$ distinct edges (and therefore exactly $h+1$ distinct vertices) is $\sum_{r=0}^{h-1} (p)_{h-r}(q)_{r+1} \frac{1}{r+1} \binom{h}{r} \binom{h-1}{r}$. This is the main term in the expectation. We will next show that the sum of the remaining terms is of smaller order. 

Suppose that $G$ has $m<h$ distinct edges.  Let $r = 0, 1, \cdots, m-1$ and choose $r+1$ bottom vertices and $m-r$ top vertices. There are $\frac{1}{r+1} \binom{m}{r} \binom{m-1}{r}$ isomorphism classes and $(p)_{m-r}(q)_{r+1}$ graphs per class. Now, $G$ contains $m$ distinct edges. So there are $h-m$ (double) edges left to place within the graph. Each of these edges can overlap with any of the $m$ distinct edges already in the graph. So there are at most $m^{h-m}$ ways to arrange the edges of multiplicity more than two, and there are thus at most $m^{h-m} \sum_{r=0}^{m-1}(p)_{m-r}(q)_{r+1} \frac{1}{r+1} \binom{m}{r} \binom{m-1}{r}$ such S-graphs with $m$ distinct edges. 

Comparing the contribution of the $m$ edge case to that of the $h$ edge case,
\begin{align*}
& \frac{m^{h-m} \sum_{r=0}^{m-1}(p)_{m-r}(q)_{r+1} \frac{1}{r+1} \binom{m}{r} \binom{m-1}{r}}{\sum_{r=0}^{h-1} (p)_{h-r}(q)_{r+1} \frac{1}{r+1} \binom{h}{r} \binom{h-1}{r}} \\
& = \frac{m^{h-m} \sum_{r=0}^{m-1} (p)_{m-r}(q)_{r+1} \frac{1}{r+1} \binom{m}{r} \binom{m-1}{r} }{\sum_{r=0}^{h-1} (p-m+r)\cdots (p-h+r+1) (p)_{m-r}(q)_{r+1} \frac{1}{r+1} \binom{h}{r} \binom{h-1}{r}} \\
& \leq \frac{m^{h-m} \sum_{r=0}^{h-1} (p)_{m-r}(q)_{r+1} \frac{1}{r+1} \binom{h}{r} \binom{h-1}{r} }{(p-m)\cdots (p-h+1)\sum_{r=0}^{h-1} (p)_{m-r}(q)_{r+1} \frac{1}{r+1} \binom{h}{r} \binom{h-1}{r}} \\
& = \frac{m^{h-m}}{(p-m)(p-m-1)\cdots (p-h+1)} \\
& \leq \bigg( \frac{h}{p-h} \bigg)^{h-m}. \\
\end{align*}
Summing over all possibilities for $m$,
\begin{align*}
\sum_{m=1}^{h-1} \left( \frac{h}{p-h} \right)^{h-m} & = \left( \frac{h}{p-h} \right)^h  \sum_{m=1}^{h-1} \left( \frac{p-h}{h} \right)^m \\
& = \left( \frac{h}{p-h} \right)^h \frac{ \left( \frac{p-h}{h} \right)^h - 1}{\frac{p-h}{h} - 1} \\
& \leq \left( \frac{h}{p-h} \right)^h \frac{h \left( \frac{p-h}{h} \right)^h}{p - 2h} \\
& = \frac{h}{p-2h}.
\end{align*}
Then 
\begin{equation*}
\mathbb{E}[tr(X_n^TX_n)^h]=\bigg( \sum_{r=0}^{h-1} (p)_{h-r}(q)_{r+1}\frac{1}{r+1}\binom{h}{r} \binom{h-1}{r} \bigg) \bigg( 1 + O\bigg( \frac{h}{p-h} \bigg) \bigg).
\end{equation*} 
Note that the implicit constant does not depend on any of the parameters. 
\end{proof}
\vspace{1 cm}

Careful asymptotics for the covariances of traces of powers of Wishart matrices are given below. The proof uses both delicate combinatorial arguments and difficult estimates and has been broken up into two separate lemmas for clarity. The result is given first in Lemma \ref{L: cov2}, followed by Lemma \ref{L:  cov1} which gives the combinatorial part of the argument, then the proof of Lemma \ref{L: cov2} is completed using estimates.

Recall that the notation $(x)_a$ represents the  falling factorial, which is defined by
\begin{equation*}
(x)_a = x (x-1)(x-2) \cdots (x-a+1).
\end{equation*}

\begin{lemma} \label{L: cov2} Let $ \{ p_n : n \geq 1 \}$ and $ \{q_n : n \geq 1 \}$ be two sequences of positive integers such that $p_n \to \infty$ and $q_n \leq p_n$. For each n, let $X_n = (x_{ij})$ be a $p_n \times q_n$ matrix where $x_{ij}$ are independent standard Gaussian random variables. There exist constants $C, D>0$ and $e_{h,k}, f_{h,k}$ such that for integers $h \geq 1$ and $k \geq h$, 
\begin{align*}
Cov(tr(X_n^TX_n)^h,tr(X_n^TX_n)^k) & = 2hk((p)_{h}q(p)_{k-1}+p(q)_{h}(q)_{k-1})\\
& \qquad \qquad + Ce_{h,k} +Df_{h,k},
\end{align*}
where $e_{h,k} = o \left(kp^{h+k-2}q^24^{h+k} \right)$ and $f_{h,k} = o \left( k^8p^{h+k-2}q^24^{h+k} \right)$.
\end{lemma}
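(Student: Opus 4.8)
The plan is to follow the S-graph expansion used in the proof of Lemma~\ref{L: exp}, but now applied to a product of two traces. Writing $tr(X_n^TX_n)^h = \sum_{G_1} x_{G_1}$ and $tr(X_n^TX_n)^k = \sum_{G_2} x_{G_2}$, where $G_1$ ranges over S-graphs with $h$ up- and $h$ down-edges and $G_2$ over S-graphs with $k$ up- and $k$ down-edges, I would start from
\[
Cov(tr(X_n^TX_n)^h, tr(X_n^TX_n)^k) = \sum_{G_1, G_2} \big( \mathbb{E}[x_{G_1} x_{G_2}] - \mathbb{E}[x_{G_1}] \mathbb{E}[x_{G_2}] \big).
\]
The first key reduction is that, since the entries of $X_n$ are independent, the summand vanishes unless $G_1$ and $G_2$ share at least one edge (a common variable $x_{ij}$), because otherwise $x_{G_1}$ and $x_{G_2}$ factor into independent pieces; moreover $\mathbb{E}[x_{G_1} x_{G_2}] = 0$ unless every edge of the union multigraph $G_1 \cup G_2$ has even total multiplicity. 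Thus the covariance is a sum over \emph{joined} pairs of S-graphs all of whose edges occur with even multiplicity. Isolating and enumerating this combinatorial family is precisely the content I would place in Lemma~\ref{L: cov1}.

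The second step is to identify the dominant configurations. For a joined pair with $D$ distinct edges the union has at most $D+1$ vertices, and the power of $p$ and $q$ appearing in the number of vertex labelings is governed by how many vertices are of top ($i$) type and how many of bottom ($j$) type. The maximal vertex count compatible with the joining constraint is achieved when $G_1 \cup G_2$ is a tree with $h+k-1$ distinct doubled edges and $h+k$ vertices, obtained by gluing two ``star'' graphs (the $r=0$ configuration from Lemma~\ref{L: exp}) along a single shared doubled edge. Since $q_n \leq p_n$, the largest such contributions have $h+k-1$ top vertices and a single bottom vertex, together with the $p \leftrightarrow q$ symmetric configuration with one top and $h+k-1$ bottom vertices. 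Counting the ways to select the shared edge on each side and to place the remaining vertices should yield exactly the main term $2hk\big((p)_h q (p)_{k-1} + p(q)_h(q)_{k-1}\big)$; tracking the coefficient $2hk$ carefully is the delicate combinatorial bookkeeping I expect Lemma~\ref{L: cov1} to supply.

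Finally, I would bound the contribution of every remaining joined pair, namely those that lose vertices relative to the tree optimum because some edge has multiplicity larger than two, because two vertices coincide, or because $G_1$ and $G_2$ share more than one edge. Each such deviation drops the power of $p$, and the stated error orders $o(kp^{h+k-2}q^2 4^{h+k})$ and $o(k^8 p^{h+k-2}q^2 4^{h+k})$ correspond to configurations sitting one top vertex below and one bottom vertex above the main term. The subtle point — and the main obstacle — is that because $h$ and $k$ grow with $n$, these errors cannot simply be declared lower order; they must be controlled with explicit dependence on $k$. The exponential factor $4^{h+k}$ should arise from Catalan-type bounds on the number of isomorphism classes of the underlying walks, while the polynomial factors $k$ and $k^8$ come from counting the ways to merge vertices or to insert the extra coincident or high-multiplicity edges. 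This is genuinely harder than in Lemma~\ref{L: exp}, where the error was a single geometric series in $h/(p-h)$ with an absolute implicit constant; here the offending families must be counted sharply enough to pin down the correct powers of $p$, $q$, and $k$ and to keep the exponential growth at $4^{h+k}$. For this reason I would separate the two error families into the terms $Ce_{h,k}$ (carrying a single power of $k$) and $Df_{h,k}$ (carrying the full $k^8$), completing the proof of Lemma~\ref{L: cov2} by applying these estimates to the enumeration established in Lemma~\ref{L: cov1}.
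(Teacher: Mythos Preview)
Your outline is essentially the paper's approach: S-graph expansion of both traces, reduce to joined pairs with all-even edge multiplicities, extract the dominant tree-with-one-glued-edge configurations, bound the rest. Two points of organization differ from yours and are worth knowing before you write the details.

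First, the paper's Lemma~\ref{L: cov1} is not organized by ``how many edges are shared'' but by whether $\mathbb{E}[X_G]$ and $\mathbb{E}[X_K]$ individually vanish. Case~(1) has both equal to $1$ (each of $G$ and $K$ consists entirely of doubled edges), the single shared edge has multiplicity four in the union, and the covariance contribution is $\mathbb{E}[X_GX_K]-\mathbb{E}[X_G]\mathbb{E}[X_K]=3-1=2$; this is exactly your glued-stars picture and produces the full double sum over $r,s$, from which the paper then peels off the two extreme terms $(p)_hq(p)_{k-1}$ and $p(q)_h(q)_{k-1}$. Cases~(2) and~(3) have $\mathbb{E}[X_G]=\mathbb{E}[X_K]=0$: here $G$ and $K$ each contain a closed cycle of $2l$ \emph{single} (odd-multiplicity) edges that coincide exactly, so the covariance contribution is $\mathbb{E}[X_GX_K]$ alone. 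Your ``share more than one edge'' bucket is these cases, but it is the cycle structure, not merely the edge count, that drives the enumeration.

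Second, your reading of the split $Ce_{h,k}+Df_{h,k}$ as two distinct combinatorial families is not how the paper uses it. The term $e_{h,k}$ collects the explicit leading error bounds across all three cases (the Vandermonde/Stirling estimates on the interior $r,s$ sums and on the Case~(2),(3) totals), while $f_{h,k}$ collects the multiplicative $O(k^6/(p-h))$, $O(k^4/(p-k+h))$, $O(k^{7}/(p-k))$ corrections that Lemma~\ref{L: cov1} attaches to each of its three displayed terms; multiplying those corrections against the leading orders is what produces the $k^8$.
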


\begin{lemma} \label{L: cov1}
Let $ \{ p_n : n \geq 1 \}$ and $ \{q_n : n \geq 1 \}$ be two sequences of positive integers such that  $q_n \leq p_n$. For each n, let $X_n = (x_{ij})$ be a $p_n \times q_n$ matrix where $x_{ij}$ are independent standard Gaussian random variables. Then for integers $h \geq 1$ and $k \geq h$, 

\begin{equation}\begin{split}  \label{eq:1}
& Cov(tr(X_n^TX_n)^h,tr(X_n^TX_n)^k) \\
& = 2hk \left( \textstyle \sum_{r=0}^{h-1}  (p_n)_{h-r}(q_n)_{r+1} \tfrac{1}{r+1} \tbinom{h}{r} \tbinom{h-1}{r} \right) \\
& \qquad \times \left( \textstyle \sum_{s=0}^{k-1}(p_n)_{k-1-s}(q_n)_{s} \tfrac{1}{s+1} \tbinom{k}{s} \tbinom{k-1}{s} \right) \left( 1 + O \left( \tfrac{k^6}{p_n-h} \right) \right)\\
& + \left( 2 \left( k-h \right) (p_n)_h(q_n)_h \textstyle \sum_{r=0}^{k-h} (p_n)_{k-h-r}(q_n)_{r} \tfrac{1}{r+1} \tbinom{k-h+1}{r} \tbinom{k-h}{r} \right) \\
& \qquad \times \left( 1 + O \left( \tfrac{k^4}{p_n-k+h} \right) \right) \\
& + \bigg( \textstyle \sum_{l=2}^{h-1} 2 \left( h-l \right) \left( k-l \right) (p_n)_l(q_n)_l \\
&  \qquad \times \left( \textstyle \sum_{r=0}^{h-l} (p_n)_{h-l-r}(q_n)_{r} \tfrac{1}{r+1} \tbinom{h-l+1}{r} \tbinom{h-l}{r} \right) \\
&  \qquad \times \left(  \textstyle \sum_{s=0}^{k-l}(p_n)_{k-l-s}(q_n)_{s} \tfrac{1}{s+1} \tbinom{k-l+1}{s} \tbinom{k-l}{s} \right) \bigg) \left( 1 + O \left( \tfrac{k^{7}}{p_n-k} \right) \right).
\end{split} \end{equation}
\end{lemma}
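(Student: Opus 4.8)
The plan is to compute the covariance directly from the moment expansion, reusing the S-graph formalism from the proof of Lemma~\ref{L: exp}. Abbreviate $T_m := tr(X_n^TX_n)^m$ and write $Cov(T_h,T_k)=\mathbb{E}[T_hT_k]-\mathbb{E}[T_h]\mathbb{E}[T_k]$. Expanding each trace as a sum over S-graphs turns the product $T_hT_k$ into a double sum over pairs $(G_1,G_2)$, where $G_1$ carries $2h$ edges and $G_2$ carries $2k$ edges; a pair contributes to $\mathbb{E}[X_{G_1}X_{G_2}]$ only when every distinct edge of the combined multigraph has even multiplicity. The first reduction is the observation that if $G_1$ and $G_2$ share no edge then $X_{G_1}$ and $X_{G_2}$ are independent, so $\mathbb{E}[X_{G_1}X_{G_2}]=\mathbb{E}[X_{G_1}]\mathbb{E}[X_{G_2}]$ and the pair cancels against $\mathbb{E}[T_h]\mathbb{E}[T_k]$. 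Hence $Cov(T_h,T_k)=\sum_{(G_1,G_2)}\big(\mathbb{E}[X_{G_1}X_{G_2}]-\mathbb{E}[X_{G_1}]\mathbb{E}[X_{G_2}]\big)$, summed only over pairs sharing at least one edge.

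The central step is to organize the surviving pairs by the shape of their overlap. Following the vertex count of Lemma~\ref{L: exp}, the dominant configurations have both $G_1$ and $G_2$ close to \emph{double trees} (each distinct edge traversed exactly twice), glued along a shared core consisting of $l$ top and $l$ bottom vertices, which contributes the factor $(p)_l(q)_l$. Three regimes arise. When $l=1$ the core degenerates to a single doubled edge joining two genuine double trees; both graphs then have all even multiplicities, so the subtraction is active and produces the factor $\mathbb{E}[x^4]-\mathbb{E}[x^2]^2=2$ on the shared edge, while $hk$ counts the identified edge in each graph and the two reduced S-graph sums count the double-tree pieces. This is the main Term~1. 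For $2\le l\le h-1$ the shared core is a genuine cycle whose edges have multiplicity one in each graph, so $\mathbb{E}[X_{G_1}]=0$ and no subtraction occurs, with double-tree decorations of sizes $h-l$ and $k-l$ hanging off either side; this gives Term~3, the factor $2(h-l)(k-l)$ recording cycle orientation and attachment position. The extreme case $l=h$, in which the shared cycle exhausts all of $G_1$ and only the $k$-side carries a decoration, yields Term~2 with coefficient $2(k-h)$.

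For the counting inside each class I would reuse the isomorphism-class bookkeeping of Lemma~\ref{L: exp}: Catalan-type factors of the form $\frac{1}{r+1}\binom{m+1}{r}\binom{m}{r}$ count isomorphism classes of the reduced decoration graphs, and a factor $(p)_a(q)_b$ counts the graphs per class. As a consistency check I would expand the product of the two sums in Term~1 and verify that its extreme summands reproduce both pieces $(p)_hq(p)_{k-1}$ and $p(q)_h(q)_{k-1}$ of the leading behavior claimed in Lemma~\ref{L: cov2}. Configurations not of the above three types (for instance two double trees sharing several doubled edges) involve fewer free vertices and are absorbed into the error terms.

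The final and hardest step is the error analysis. Within each overlap class one must bound the contribution of sub-configurations with fewer distinct edges---edges of multiplicity four or more, or coincident vertices---just as the $m<h$ case was handled in Lemma~\ref{L: exp} through the geometric series $\sum_m (h/(p-h))^{h-m}$. The new difficulty, absent for a single trace, is that gluing two walks generates combinatorial prefactors that grow polynomially in $k$; controlling these sharply is exactly what produces the stated relative errors $O(k^6/(p-h))$, $O(k^4/(p-k+h))$ and $O(k^7/(p-k))$. This is the main obstacle: because $h$ and $k$ grow with $n$, no $k$-dependence may be hidden in an implicit constant, so every combinatorial factor must be estimated uniformly in $l$ and sharply in $k$, which is considerably more delicate than the single-trace bookkeeping of Lemma~\ref{L: exp}.
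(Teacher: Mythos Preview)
Your proposal is correct and follows essentially the same approach as the paper: the same reduction to pairs of S-graphs sharing at least one edge, the same three-way case split according to the overlap structure (your $l=1$, $l=h$, and $2\le l\le h-1$ are exactly the paper's Cases~(1), (2), and (3)), and the same counting via Catalan-type isomorphism classes with the error absorbed by sub-configurations having fewer distinct edges. You have also correctly identified that the delicate part is executing the error analysis uniformly in $k$; the paper carries this out explicitly case by case, bounding $\mathbb{E}[X_GX_K]$ via Stirling on the worst-case high-multiplicity edge and summing the resulting geometric-type series, which is precisely the step you flag as the main obstacle but do not yet perform.
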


\begin{proof}
Write $(tr(X_n^TX_n)^h)$ as 
\begin{equation*} 
\sum_{1 \leq i_1,...,i_h \leq p} \sum_{1 \leq j_1,...,j_h \leq q} x_{i_1j_1}x_{i_1j_2}x_{i_2j_2} \cdots x_{i_{h-1}j_h}x_{i_hj_h} x_{i_hj_1}  = \sum_G x_G,
 \end{equation*}
where $G$ is a bipartite graph with the $i_k$ on a top line and the $j_k$ on a bottom line, with $h$ up-edges from $j_k$ to $i_k$ and $h$ down-edges from $i_k$ to $j_{k+1}$. An edge $(i,j)$ in the S-graph corresponds to the variable $x_{ij}$. Now,
\begin{align*}
& Cov(tr(X^TX)^h, tr(X^TX)^k) \\
& = \mathbb{E}[tr(X^TX)^htr(X^TX)^k] - \mathbb{E}[tr(X^TX)^h]\mathbb{E}[tr(X^TX)^k] \\
& = \sum_{G,K}( \mathbb{E}[X_GX_K] - \mathbb{E}[X_G]\mathbb{E}[X_K] ),
\end{align*}
where $G,K$ are both S-graphs. If $G \cup K$ contains a single edge, or an edge of odd multiplicity, then either $G$ or $K$ also contains a single edge. In either case, $\mathbb{E}[X_{G}X_{K}]$ and $\mathbb{E}[X_{G}]\mathbb{E}[X_{K}]$ are both zero. If $G$ and $K$ do not have a coincident edge, then $\mathbb{E}[X_{G}X_{K}] = \mathbb{E}[X_{G}]\mathbb{E}[X_{K}]$ and the difference is zero. It thus suffices to consider the case where there are no edges of odd multiplicity and $G$ and $K$ have at least one edge in common. There are three main cases: 

(1) $\mathbb{E}[X_G] = \mathbb{E}[X_K] = 1$ and $G \cup K$ contains an edge of multiplicity four. There are 
\begin{equation*}\begin{split}
& \left( hk \sum_{r=0}^{h-1} \sum_{s=0}^{k-1} (p)_{h-r}(q)_{r+1} \frac{1}{r+1} \binom{h}{r} \binom{h-1}{r} (p)_{k-1-s}(q)_{s} \frac{1}{s+1} \binom{k}{s} \binom{k-1}{s} \right) \\
& \qquad \times \left( 1 + O \left( \frac{k^6}{p-h} \right) \right)
\end{split}\end{equation*}
such pairs $(G,K)$ of graphs. 

One can first build $G$ as in Lemma \ref{L: exp}. In this case, there are 
\begin{equation*} \sum_{r=0}^{h-1} (p)_{h-r}(q)_{r+1}\frac{1}{r+1}\binom{h}{r} \binom{h-1}{r}  \end{equation*} 
such graphs. 

Next, build the graph of $K$. There are $h$ possible choices of edges in the graph of $G$ with which one of the edges of $K$ can coincide and $k$ possible times in the construction of $K$ at which a coincident edge may be added. $K$ has an edge in common with $G$ and therefore $K$ can have at most $k-1$ new vertices. Let $s = 0, \cdots, k-1$ and choose $s$ bottom vertices and $k-s-1$ top vertices. As with $G$, the number of isomorphism classes is $\binom{k}{s} \binom{k-1}{s} - \binom{k}{s+1} \binom{k-1}{s-1} = \frac{1}{s+1} \binom{k}{s} \binom{k-1}{s}$. There will be  $ (p)_{k-1-s}(q)_s$ graphs in each isomorphism class. 

Thus there are 
\begin{equation*} hk \sum_{r=0}^{h-1} \sum_{s=0}^{k-1} (p)_{h-r}(q)_{r+1} \frac{1}{r+1} \binom{h}{r} \binom{h-1}{r} (p)_{k-1-s}(q)_{s} \frac{1}{s+1} \binom{k}{s} \binom{k-1}{s} \end{equation*}
 possible graphs with exactly $h+1$ distinct vertices in $G$ and $k-1$ additional distinct vertices in $K$. $G$ and $K$ both contain only edges of multiplicity two. Thus $\mathbb{E}[X_GX_K] = 3$ and $\mathbb{E}[X_G] = \mathbb{E}[X_K] = 1$. This gives the factor of 2 in the first term of the expression in the lemma. 

Any graphs with fewer distinct edges will produce terms of smaller order than the term found above. The error is computed by counting the number of possible graphs. Therefore, suppose now that there is at least one fewer distinct edge in $G \cup K$, so that either $G$ contains at most $h-1$ distinct edges or $K$ contains at most $k-2$ distinct vertices. Without loss of generality, we assume that it is $G$ that has a reduced number of edges, so that $G$ has at most $h-1$ distinct edges and $K$ has at most $k-1$ distinct vertices. Let $G$ have $m < h$ distinct edges, hence $m+1$ non-coincident vertices, and $K$ have $n \leq k-1$ non-coincident vertices. Taking $r = 0, \cdots, m-1$, $r+1$ bottom vertices, and $m-r$ top vertices, there are $\frac{1}{r+1} \binom{m}{r} \binom{m-1}{r}$ isomorphism classes and $(p)_{m-r}(q)_{r+1}$ graphs per class. Now, $G$ contains $m$ distinct edges. So there are $h-m$ (double) edges left to place within the graph. Each of these (double) edges can coincide with any of the $m$ distinct edges. So there are at most $m^{h-m}$ ways to arrange the edges of multiplicity greater than two. In the same way, there are at most $n^{k-n}$ ways to arrange the edges of multiplicity greater than two in the construction of $K$. Let $s = 0, \cdots, n$, and choose $s$ bottom vertices and $n-s$ top vertices in the graph of $K$, then there are $\frac{1}{s+1} \binom{n+1}{s} \binom{n}{s}$ isomorphism classes and $(p)_{n-s}(q)_s$ graphs per class. ($n+1$ for the $n$ new vertices in $K$ and the one overlapping vertex). The number of such graphs is at most
\begin{equation*}
m^{h-m}n^{k-n} \textstyle \sum_{r=0}^{m-1} (p)_{m-r}(q)_{r+1} \tfrac{1}{r+1} \tbinom{m}{r} \tbinom{m-1}{r} \textstyle \sum_{s=0}^n (p)_{n-s}(q)_s \tfrac{1}{s+1} \tbinom{n+1}{s} \tbinom{n}{s}.
\end{equation*}
Lastly, observe that $\mathbb{E}[X_GX_K]-\mathbb{E}[X_G]\mathbb{E}[X_K] \leq \mathbb{E}[X_GX_K]$, and this expected value will be largest when all but one of the edges in $G \cup K$ has multiplicity two. That is, when $G$ has an edge of multiplicity $2(h-m+1)$ that overlaps with an edge of $K$ of multiplicity $2(k-n+1)$, this one edge will have multiplicity $2h + 2k -2m - 2n +4$. Thus, by a quantitative version of Stirling's formula (Lemma \ref{L: stirling}),
\begin{align*}
E & := \mathbb{E}[X_GX_K]  = (2h + 2k - 2m - 2n + 3)!! \\
& = \frac{(2h + 2k - 2m -2n + 4)!}{2^{h+k-m-n+2}(h+k-m-n+2)!} \\
& \leq \frac{e}{\sqrt{\pi}} \bigg( \frac{2h+2k-2m-2n+4}{e} \bigg)^{h+k-m-n+2}.
\end{align*}

The ratio of this term to the contribution from those graphs with exactly $h+1$ and $k-1$ distinct vertices that was calculated earlier is

\begin{align*}
& \tfrac{Em^{h-m}n^{k-n}\sum\limits_{r=0}^{m-1} (p)_{m-r}(q)_{r+1} \tfrac{1}{r+1} \tbinom{m}{r} \tbinom{m-1}{r} \sum\limits_{s=0}^n (p)_{n-s}(q)_s \tfrac{1}{s+1} \tbinom{n+1}{s} \tbinom{n}{s}}{2hk \sum\limits_{r=0}^{h-1} \sum\limits_{s=0}^{k-1} (p)_{h-r}(q)_{r+1} \tfrac{1}{r+1} \tbinom{h}{r} \tbinom{h-1}{r} (p)_{k-1-s}(q)_{s} \tfrac{1}{s+1} \tbinom{k}{s} \tbinom{k-1}{s}}\\
& \leq \tfrac{Em^{h-m}n^{k-n} \sum\limits_{r=0}^{m-1} (p)_{m-r}(q)_{r+1} \tfrac{1}{r+1} \tbinom{m}{r} \tbinom{m-1}{r} \sum\limits_{s=0}^n (p)_{n-s}(q)_s \tfrac{1}{s+1} \tbinom{n+1}{s} \tbinom{n}{s}}{2hk (p-h)^{h-m}(p-k)^{k-n-1}\sum\limits_{r=0}^{h-1} \sum\limits_{s=0}^{k-1} (p)_{m-r}(q)_{r+1} \tfrac{1}{r+1} \tbinom{h}{r} \tbinom{h-1}{r} (p)_{n-s}(q)_{s} \tfrac{1}{s+1} \tbinom{k}{s} \tbinom{k-1}{s}}\\
& \leq \tfrac{Em^{h-m}n^{k-n}\sum\limits_{r=0}^{h-1} (p)_{m-r}(q)_{r+1} \tfrac{1}{r+1} \tbinom{h}{r} \tbinom{h-1}{r} \sum\limits_{s=0}^{k-1} (p)_{n-s}(q)_s \tfrac{1}{s+1} \tbinom{k}{s} \tbinom{k-1}{s}}{2hk (p-h)^{h-m}(p-k)^{k-n-1}\sum\limits_{r=0}^{h-1} \sum\limits_{s=0}^{k-1} (p)_{m-r}(q)_{r+1} \tfrac{1}{r+1} \tbinom{h}{r} \tbinom{h-1}{r} (p)_{n-s}(q)_{s} \tfrac{1}{s+1} \tbinom{k}{s} \tbinom{k-1}{s}}\\
& = \frac{Em^{h-m}n^{k-n}}{2hk(p-h)^{h-m}(p-k)^{k-n-1}}\\
& \leq \frac{\frac{e}{\sqrt{\pi}} \bigg( \frac{2h+2k-2m-2n+4}{e} \bigg)^{h+k-m-n+2}h^{h-m-1}k^{k-n-1}}{2(p-h)^{h-m}(p-k)^{k-n-1}} \\
& \leq \frac{e}{2\sqrt{\pi}} \bigg( \frac{(2h+2k)h}{e(p-h)} \bigg)^{h-m-1} \bigg( \frac{(2h+2k)k}{e(p-k)} \bigg)^{k-n-1} \bigg( \frac{ (2h+2k)^4}{e^4(p-h)} \bigg). \\
\end{align*}

Summing over all possible combinations of $m$ and $n$ gives the contribution from all the graphs that can be constructed under the assumption of one or more additional coincident edges as follows, 

\begin{align*}
& \sum_{m=1}^{h-1} \sum_{n=1}^{k-1} \frac{e}{2\sqrt{\pi}} \bigg( \frac{(2h+2k)h}{e(p-h)} \bigg)^{h-m-1} \bigg( \frac{(2h+2k)k}{e(p-k)} \bigg)^{k-n-1} \bigg( \frac{ (2h+2k)^4}{e^4(p-h)} \bigg) \\
& = \frac{e}{2\sqrt{\pi}}  \bigg( \frac{ (2h+2k)^4}{e^4(p-h)} \bigg) \left( \frac{h(2h-2k)}{e(p-h)} \right)^{h-1} \left( \frac{k(2h+2k)}{e(p-k)} \right)^{k-1} \\
& \qquad \times \sum_{m=1}^{h-1} \left( \frac{e(p-h)}{h(2h+2k)} \right)^m \sum_{n=1}^{k-1} \left( \frac{e(p-k)}{k(2h+2k)} \right)^n \\
& \leq \frac{e}{2\sqrt{\pi}} hk  \bigg( \frac{ (2h+2k)^4}{e^4(p-h)} \bigg) \left( \frac{h(2h-2k)}{e(p-h)} \right)^{h-1} \left( \frac{k(2h+2k)}{e(p-k)} \right)^{k-1} \\
& \qquad \times \left( \frac{e(p-h)}{h(2h+2k)} \right)^{h-1} \left( \frac{e(p-k)}{k(2h+2k)} \right)^{k-1} \\
& = \frac{e}{2\sqrt{\pi}}  \frac{hk(2h+2k)^4}{e^4(p-h)} \\
& \leq \frac{1}{2\sqrt{\pi}}  \frac{4^4k^6}{e^3(p-h)}.
\end{align*}

(2) Assume that $\mathbb{E}[X_G] = \mathbb{E}[X_K] = 0$, $\mathbb{E}[X_GX_K]$ does not equal $0$, and $2<h<k$ where $G$ is a graph consisting of $2h$ single edges. Then $K$ must contain a subgraph that exactly overlaps the single edges of $G$ in order that $\mathbb{E}[X_GX_K] \neq 0$. There are $2(p)_h(q)_h$ ways to construct $G$ since there are two possible orientations and $(p)_h(q)_h$ labels. Now $K$  will have $2k - 2h$  vertices not contained in the subgraph of single edges, hence at most $k-h$ distinct edges, and at most $k-h$ distinct vertices. First consider the case with exactly $k-h$ distinct vertices. As in the previous case, this will be the dominating term for case (2). The graph of $G$ can be attached onto $K$ in $k-h$ different places. Let $r= 0,...,k-h$. We again consider all possible ways of breaking the $k-h$ vertices into top and bottom vertices by choosing $r$ bottom vertices and $k-h-r$ top vertices. 

There are $\binom{k-h}{r}$ ways to arrange $r$ ones into the $k-h$ positions of down edges that could lead to new bottom vertices. Since the initial bottom vertex may be in the subgraph of single edges, it is possible to leave every bottom vertex for the last time, so there are $\binom{k-h}{r}$ ways to arrange $r$ minus ones into the $k-h$ possible positions of up edges that leave a bottom vertex for the last time. As before, not all of these $\binom{k-h}{r} \binom{k-h}{r}$ arrangements are proper S-graphs. There are $\binom{k-h}{r+1} \binom{k-h}{r-1}$ bad sequences.  Then there are $\binom{k-h}{r} \binom{k-h}{r} - \binom{k-h}{r+1} \binom{k-h}{r-1} =\frac{1}{r+1}\binom{k-h+1}{r} \binom{k-h}{r}$ possible isomorphism classes. Thus when $k > h$, there are 
\begin{equation*}
2(k-h)(p)_h(q)_h \sum_{r=0}^{k-h} (p)_{k-h-r}(q)_{r} \frac{1}{r+1} \binom{k-h+1}{r} \binom{k-h}{r}
\end{equation*}
graphs. 

Now, to compute the error, suppose there is at least one less distinct edge in $K$. Assume there are $n < k-h$ distinct vertices. As before, the graph of $G$ can be attached onto $K$ in $n$ places. There are $k-h-n$ double edges left to place, each of which can coincide with any of the $n$ distinct edges. So there are $n^{k-h-n}$ ways to arrange the non-distinct edges. Take $r$ bottom vertices and $n-r$ top vertices where $r= 0, \cdots, n$. Then there are $\binom{n+1}{r} \binom{n}{r}$ possible isomorphism classes of graphs and $(p)_{n-r}(q)_r$ graphs per class. Such graphs will have maximal expectation when all of the edges have multiplicity two, except for one edge of multiplicity $2(k-h-n+1)$. By Stirling's formula, $\mathbb{E} \leq \frac{e}{\sqrt{\pi}}  \bigg( \frac{2k-2h-2n+2}{e} \bigg)^{k-h-n+1}$. Comparing to the case with exactly $k-h$ distinct vertices, 

\begin{align*}
& \frac{\frac{e}{\sqrt{\pi}} \bigg( \frac{2k-2h-2n+2}{e} \bigg)^{k-h-n+1} n^{k-h-n+1} 2(p)_h(q)_h \sum_{r=0}^n (p)_{n-r}(q)_r \frac{1}{r+1} \binom{n+1}{r} \binom{n}{r}} {2(k-h)(p)_h(q)_h \sum_{r=0}^{k-h} (p)_{k-h-r}(q)_{r} \frac{1}{r+1} \binom{k-h+1}{r} \binom{k-h}{r}} \\
& \leq \frac{ \frac{e}{\sqrt{\pi}} \bigg( \frac{2k-2h-2n+2}{e} \bigg)^{k-h-n+1} n^{k-h-n+1}}{(k-h)(p-k+h)^{k-h-n}} \\
& \leq \frac{ \frac{e}{\sqrt{\pi}} \bigg( \frac{2k-2h-2n+2}{e} \bigg)^{k-h-n+1} (k-h)^{k-h-n}}{(p-k+h)^{k-h-n}} \\
& = \frac{e}{\sqrt{\pi}} \bigg( \frac{(k-h)(2k-2h-2n+2)}{e(p-k+h)} \bigg)^{k-h-n} \left( \frac{2k-2h-2n+2}{e} \right) \\
& \leq \frac{e}{\sqrt{\pi}} \bigg( \frac{2k^2}{e(p-k+h)} \bigg)^{k-h-n} \left( \frac{2k}{e} \right).
\end{align*}
 
Summing over all possible constructions,
 \begin{align*}
  \sum_{n=0}^{k-h-1} & \frac{e}{\sqrt{\pi}}  \bigg( \frac{2k^2}{e(p-k+h)} \bigg)^{k-h-n} \frac{2k}{e} \\ & = \frac{2k}{\sqrt{\pi}}   \bigg( \frac{2k^2}{e(p-k+h)} \bigg)^{k-h} \sum_{n=0}^{k-h-1} \bigg( \frac{e(p-k+h)}{2k} \bigg)^n \\
 & \leq \frac{2k^2}{\sqrt{\pi}}   \bigg( \frac{2k^2}{e(p-k+h)} \bigg)^{k-h} \bigg( \frac{e(p-k+h)}{2k} \bigg)^{k-h-1} \\
 & = \frac{4k^4}{\sqrt{\pi}e(p-k+h)}. \\
 \end{align*}
Then there are 
 \begin{equation*}
 \left( 2 \left( k-h \right )(p)_h(q)_h \sum\limits_{r=0}^{k-h} (p)_{k-h-r}(q)_{r} \tfrac{1}{r+1} \tbinom{k-h+1}{r} \tbinom{k-h}{r} \right) \left( 1 + O \left( \tfrac{k^4}{p-k+h} \right) \right)
 \end{equation*}
 possible graphs from case (2). 

(3) Assume that $\mathbb{E}[X_G] = \mathbb{E}[X_K] = 0$, $\mathbb{E}[X_GX_K]$ does not equal $0$, and $G$ is not a graph consisting of $2h$ single edges. Then for each $2 \leq l \leq h-1$, we will count the number of ways that both $G$ and $K$ could contain proper subgraphs consisting of $2l$ single edges that overlap exactly. In this case, for each $l$, there are
\begin{equation*} \begin{split}
& 2(h-l)(k-l) (p)_l(q)_l \\
& \quad \times \textstyle \sum_{r=0}^{h-l} (p)_{h-l-r}(q)_{r} \frac{1}{r+1} \binom{h-l+1}{r} \binom{h-l}{r} \textstyle \sum_{s=0}^{k-l} (p)_{k-l-s}(q)_{s} \frac{1}{s+1} \binom{k-l+1}{s} \binom{k-l}{s}
\end{split}\end{equation*}
possible graphs. 

This can be seen by first building up the closed cycle with $2l$ vertices. There are two orientations for the cycle and $(p)_l(q)_l$ choices of vertices. The rest of $G$ will have at most $h-l$ remaining distinct edges not within this cycle. The rest of $K$ will have at most $k-l$ distinct edges. First, consider the case when $G$ has exactly $h-l$ distinct edges and $K$ has exactly $k-l$ distinct edges. This will be the leading term for case (3). The subgraph of $2l$ single edges can be inserted into the construction of $G$ before any one of these edges. $G$ has $2h-2l$ vertices not in the cycle. Each edge must have multiplicity two and the cycle must attach to $G$ at one vertex. So $G$ has exactly $h-l$ distinct vertices outside of the cycle. Let $r = 0, \cdots, h-l$ and take $r$ bottom vertices and $h-l-r$ top vertices. As before, there are $\frac{1}{r+1} \binom{h-l+1}{r} \binom{h-l}{r}$ isomorphism classes and $(p)_{h-l-r}(q)_r$ choices of labels. Then build up $K$ in the same way by choosing $s = 0, \cdots, k-l$ bottom vertices and $k-l-s$ top vertices and inserting the cycle after any of the $k-l$ edges. Then there are $(k-l)\sum_{s=0}^{k-l} \frac{1}{s+1}\binom{k-l+1}{s} \binom{k-l}{s} (p)_{k-l-s}(q)_s$ non-isomorphic graphs. 

To compute the error, assume for some $l$ that $G \cup K$ contains at least one less distinct edge. First, we will assume that all of the edges with multiplicity more than two all lie within the cycle consisting of $2l$ distinct edges. The cycle part of $G$ will then contain $2l$ distinct edges. Let the $i^{th}$ edge have multiplicity $m_i$. Then $m_i \geq 1$, each $m_i$ is odd by assumption, and $2l \leq \sum_{i=1}^{2l} m_i := m \leq 2h$. Similarly, the cycle part of $K$ consists of the overlapping $2l$ edges each with multiplicity $n_i \geq 1$, odd, and $2l < \sum_{i=1}^{2l} n_i := n \leq 2k$. Where, without loss of generality, it has been assumed that at least one edge in the cycle piece of $K$ has multiplicity at least 3. There are two orientations for the subgraph and $(p)_l(q)_l$ choices of vertices. 

Next, build the rest of $G$. There are $\alpha := \frac{2h-m}{2}$ (double) edges left to place outside of the cycle. Note that $0 \leq \alpha \leq h-l$. Let $r = 0, \cdots, \alpha$ and choose $r$ bottom vertices and $\alpha -r$ top vertices. There are $\frac{1}{r+1} \binom{\alpha+1}{r} \binom{\alpha}{r}$ ways to arrange the edges in $G$ outside of the cycle and $(p)_{\alpha -r}(q)_r$ ways to label the edges. Furthermore, there are $\alpha$ places in the construction of $G$ where the cycle can be inserted. Similarly, build up the rest of $K$ by taking $\beta := \frac{2k-n}{2}$ (double) edges outside of the cycle with $0 \leq \beta < h-l$.  

The largest expectation occurs when all of the edges in the combined cycles are double edges except for one with multiplicity $m+n - (4l-2)$. The corresponding expectation for such a graph is 
\begin{equation*}\begin{split}
E  & := (m+n-4l+1)!! \\
& = \frac{(m+n-4l+2)!}{2^{\frac{m}{2} + \frac{n}{2} - 2l+1}(\frac{m}{2}+\frac{n}{2}-2l+1)!}\\
& \leq \frac{e}{\sqrt{\pi}} \bigg( \frac{m+n-4l+2}{e} \bigg)^{\frac{m}{2} + \frac{n}{2} - 2l +1},
\end{split} \end{equation*}
 by Stirling's Formula. Comparing this to the term for the same cycle of $2l$ edges but where each edge has exact multiplicity two, 

\begin{align*}
& \tfrac{2E(p)_l(q)_l \alpha \beta \sum_{r=0}^{\alpha} (p)_{\alpha -r}(q)_r \frac{1}{r+1} \binom{\alpha+1}{r} \binom{\alpha}{r} \sum_{s=0}^{\beta} (p)_{\beta -s}(q)_s \frac{1}{s+1} \binom{\beta+1}{s} \binom{\beta}{s}} {2(h-l)(k-l)(p)_l(q)_l \sum_{r=0}^{h-l} \sum_{s=0}^{k-l} (p)_{h-l-r}(q)_r \frac{1}{r+1} \binom{h-l+1}{r} \binom{h-l}{r} (p)_{k-l-s}(q)_s \frac{1}{s+1} \binom{k-l+1}{s} \binom{k-l}{s}} \\
& \leq \frac{ E} { (p-h+l)^{h-l-\alpha} (p-k+l)^{k-l+\beta} }.
\end{align*}

Now, $\frac{m}{2} = h- \alpha$ and $\frac{n}{2} = k - \beta$. So $\frac{m}{2} + \frac{n}{2} - 2l + 1 = h +k - 2l - \alpha - \beta +1$. Therefore, the ratio of the terms is 
\begin{align*}
\frac{e}{\sqrt{\pi}} & \bigg( \tfrac{m+n-4l+2}{e(p-h+l)} \bigg)^{h-l - \alpha}\bigg( \tfrac{m+n-4l+2}{e(p-k+l)} \bigg)^{k-l - \beta} \bigg( \tfrac{m+n-4l+2}{e} \bigg) \\
& \leq  \bigg( \tfrac{2h+2k-4l+2}{e(p-h+l)} \bigg)^{h-l - \alpha}\bigg( \tfrac{2h+2k-4l+2}{e(p-k+l)} \bigg)^{k-l - \beta} \bigg( \tfrac{2h+2k-4l+2}{\sqrt{\pi}} \bigg).
 \end{align*}
Summing over all possible multiplicities within the cycles, 
\begin{align*}
 \sum_{\alpha = 0}^{h-l} \sum_{\beta = 0}^{k-l-1} & \left( \tfrac{2h+2k-4l+2}{e(p-h+l)} \right)^{h-l - \alpha}\left( \tfrac{2h+2k-4l+2}{e(p-k+l)} \right)^{k-l - \beta} \left( \tfrac{2h+2k-4l+2}{\sqrt{\pi}} \right)\\
& \leq (h-l)(k-l) \frac{(2h+2k-4l+2)^2}{e\sqrt{\pi}(p-k+l)} \\
& \leq \frac{16k^4}{e\sqrt{\pi}(p-k+l)}.
\end{align*}

Further summing over all possible numbers of edges $l$ within the cycle, 
\begin{align*}
& \sum_{l=2}^{h-1} \frac{16k^4}{e\sqrt{\pi}(p-k+l)} \leq \frac{(h-1)16k^4}{e\sqrt{\pi}(p-k)} \leq \frac{16k^5}{e\sqrt{\pi}(p-k)}.
\end{align*}

Finally, extend to the case where some of the edges of multiplicity greater than two could lay outside of the cycle. Fix the arrangement of edges inside the cycle and consider the possibility that there is at least one less distinct edge outside of the cycle in $G$. Take $a < \alpha$ distinct edges in $G$ outside of the cycle and $b \leq \beta$ distinct edges in $K$. Then there are $\alpha - a $ double edges left to place in the graph of $G$ and $a^{\alpha - a}$ places to put them. There are also $b^{\beta - b}$ ways to arrange the remaining double edges in $K$. There are $ab$ places to insert the cycle into both graphs and $\sum_{r=0}^a (p)_{a-r}(q)_r \frac{1}{r+1} \binom{a+1}{r} \binom{a}{r} \sum_{s=0}^b (p)_{b-s}(q)_s \frac{1}{s+1} \binom{b+1}{s} \binom{b}{s}$ ways to arrange and label all of the distinct edges outside of the cycle. The maximum expectation in this case occurs when not only are all but one of the edges in the cycle double edges, but also all but one of the edges outside of the cycle are double edges. Then the two remaining edges have multiplicity  $m+n - (4l-2)$, as before, and $2(\alpha - a+1) + 2(\beta - b +1)$. An application of Stirling's Formula then gives a maximum expectation of $E_{\alpha \beta}$, where $E_{\alpha \beta}$ is bounded by $\frac{e}{\sqrt{\pi}} \bigg( \frac{2\alpha + 2 \beta - 2a - 2b +4}{e} \bigg)^{\alpha + \beta - a-b+2}\frac{e}{\sqrt{\pi}} \bigg( \frac{m+n-4l+2}{e} \bigg)^{\frac{m}{2} + \frac{n}{2} - 2l +1}$. Comparing to the previous case, 

\begin{align*}
& \tfrac{E_{\alpha \beta} a^{\alpha -a +1}b^{\beta - b+1} 2(p)_l(q)_l \sum\limits_{r=0}^a (p)_{a-r}(q)_r \frac{1}{r+1} \binom{a+1}{r} \binom{a}{r} \sum\limits_{s=0}^b (p)_{b-s}(q)_s \frac{1}{s+1} \binom{b+1}{s} \binom{b}{s}  }{\frac{e}{\sqrt{\pi}} ( \frac{m+n-4l+2}{e} )^{\frac{m}{2}+\frac{n}{2}-2l+1} 2(p)_l(q)_l \alpha \beta \sum\limits_{r=0}^{\alpha} (p)_{\alpha -r}(q)_r \frac{1}{r+1} \binom{\alpha+1}{r} \binom{\alpha}{r} \sum\limits_{s=0}^{\beta} (p)_{\beta -s}(q)_s \frac{1}{s+1} \binom{\beta+1}{s} \binom{\beta}{s}} \\
& \leq \frac{\frac{e}{\sqrt{\pi}} \bigg( \frac{2\alpha + 2 \beta - 2a - 2b +4}{e} \bigg)^{\alpha + \beta - a-b+2} (h-k)^{\alpha + \beta - a- b}} {(p-\alpha)^{\alpha -a}(p-\beta)^{\beta - b}} \\
& \leq \frac{e}{\sqrt{\pi}}\bigg( \frac{4k^2}{e(p-\alpha)} \bigg)^{\alpha -a}\bigg( \frac{4k^2}{e(p-\beta)} \bigg)^{\beta -b} \bigg( \frac{4k^2}{e} \bigg)^2.
\end{align*}

Summing over all possible $a$, $b$ and $l$,
\begin{align*}
 \sum_{l=2}^{h-1} \sum_{a=0}^{\alpha-1} & \sum_{b=0}^{\beta} \frac{e}{\sqrt{\pi}}\bigg( \frac{4k^2}{e(p-\alpha)} \bigg)^{\alpha -a}\bigg( \frac{4k^2}{e(p-\beta)} \bigg)^{\beta -b} \bigg( \frac{4k^2}{e} \bigg)^2 \\
 &  \leq \sum_{l=2}^{h-1} \frac{64k^6}{e^2\sqrt{\pi}(p-\beta)} \\
& \leq \frac{64k^7}{e^2\sqrt{\pi}(p-k)}.
\end{align*}
This completes the proof.
\end{proof}

\begin{proof}[Proof of Lemma \ref{L: cov2}] 
The expression for the covariance may be simplified by finding bounds on each of the three terms in equation \eqref{eq:1}. Beginning with the first term, 
\begin{align*}
& 2hk \left( \textstyle \sum_{r=0}^{h-1} (p)_{h-r}(q)_{r+1}\tfrac{1}{r+1} \tbinom{h}{r}\tbinom{h-1}{r} \right) \left( \textstyle \sum_{s=0}^{k-1} (p)_{k-s-1}(q)_{s} \tfrac{1}{s+1} \tbinom{k}{s} \tbinom{k-1}{s} \right) \\
& = 2hk \left( (p)_hq+ S_1 + p(q)_h \right) \left( (p)_{k-1} + S_2 + (q)_{k-1} \right), 
\end{align*} 
where \begin{equation*} S_1 = \sum_{r=1}^{h-2} (p)_{h-r}(q)_{r+1}\frac{1}{r+1} \binom{h}{r}\binom{h-1}{r}\end{equation*} and \begin{equation*}S_2 = \sum_{s=1}^{k-2} (p)_{k-s-1}(q)_{s} \frac{1}{s+1} \binom{k}{s} \binom{k-1}{s}.\end{equation*}
If $h=1$, then $S_1 = 0$. If $h \geq 2$, since $\frac{q}{p} \leq 1$, 
\begin{equation} \begin{split} \label{eq:3}
S_1 & = \sum_{r=1}^{h-2} (p)_{h-r}(q)_{r+1}\frac{1}{r+1} \binom{h}{r}\binom{h-1}{r}\\
& \leq \sum_{r=1}^{h-2} p^{h-r}q^{r+1} \frac{1}{r+1} \binom{h}{r} \binom{h-1}{r} \\
& = p^{h-1}q^2 \sum_{r=1}^{h-2} \bigg( \frac{q}{p} \bigg)^{r-1}  \frac{h!}{r!(h-r)!} \frac{(h-1)!}{(r+1)!(h-1-r)!} \\
& \leq p^{h-1}q^{2} \frac{1}{h} \sum_{r=1}^{h-2} \binom{h}{h-r} \binom{h}{r+1} \\
& \leq p^{h-1}q^{2} \frac{1}{h} \binom{2h}{h+1} \\
& \leq \frac{p^{h-1}q^{2}}{h} \frac{ e (2h)^{2h+1/2}e^{-2h}}{2 \pi (h+1)^{h+3/2}e^{-h-1}(h-1)^{h-1/2}e^{-h+1}} \\
& \leq  \frac{2^{2h}p^{h-1}q^{2}}{h^{3/2}} ,
\end{split} \end{equation}
where a quantitative form of Stirling's approximation (Lemma \ref{L: stirling}) was used in the second to last line. That is,
\begin{equation*}
0 \leq S_1 \leq \frac{2^{2h}p^{h-1}q^{2}}{h^{3/2}};
\end{equation*}
similarly, 
\begin{align*}
0 \leq S_2  \leq \frac{2^{2k} p^{k-2}q}{k^{3/2}}.
\end{align*} 
Then the first term of the covariance in equation \eqref{eq:1} is
\begin{align*}
2hk & \big(  (p)_{h}q(p)_{k-1} + (p)_hq S_2 + (p)_h(q)_k +(p)_{k-1}S_1 + S_1S_2 \\
&  \qquad \quad + (q)_{k-1}S_1 + (p)_k(q)_h + p(q)_hS_2 + p(q)_{h}(q)_{k-1} \big) \left( 1 + O \left( \tfrac{k^6}{p-h} \right) \right)\\
& = 2hk((p)_{h}q(p)_{k-1}+p(q)_{h}(q)_{k-1}) + \delta_{h,k},
\end{align*} 
where 
\begin{equation*} \begin{split}
& \delta_{h,k} \leq  2hk \bigg( \frac{4p^{h+k-2}q^{2}2^{2k}}{k^{3/2}} + \frac{2p^{h+k-2}q^{2}2^{2h}}{h^{3/2}} + \frac{p^{h+k-3}q^{3}2^{2h+2k}}{(k)^{3/2}} \bigg) \\
& + O \left( \frac{k^7h p^{h+k-1}q}{p-h} + \frac{k^7hpq^{h+k-1}}{p-h} + \frac{k^7h2^{2h+2k}p^{h+k-2}q^2}{\sqrt{h}(p-h)} \right)  \\
& \leq C\frac{p^{h+k-2}q^{2}2^{2h+2k}k}{\sqrt{h}} + \widetilde{C}2^{2h+2k}\frac{hk^7p^{h+k-1}q}{p-h}.
\end{split} \end{equation*}
When $h=k$ the second term in the covariance formula \eqref{eq:1} is zero. When $h<k$ the second sum in the covariance formula is bounded as follows,
\begin{equation*} \begin{split}
2(k-h)(p)_h & (q)_h  \sum_{r=0}^{k-h} (p)_{k-h-r}(q)_r \frac{1}{r+1} \binom{k-h+1}{r} \binom{k-h}{r} \\
& \leq 2(k-h)p^hq^h \sum_{r=0}^{k-h} p^{k-h-r}q^r \frac{1}{r+1} \binom{k-h+1}{r} \binom{k-h}{r} \\
& = 2(k-h)p^kq^h \sum_{r=0}^{k-h} \left( \frac{q}{p} \right)^r \frac{(k-h+1)!}{r!(k-h+1-r)!} \frac{(k-h)!}{(r+1)!(k-h-r)!} \\
& \leq 2p^kq^h\frac{k-h}{k-h+1} \sum_{r=0}^{k-h} \binom{k-h+1}{k-h-r} \binom{k-h+1}{r}\\
& \leq 2p^kq^h \binom{2(k-h+1)}{k-h} \\
& \leq \frac{e^2p^kq^h2^{2(k-h)+5/2}}{\pi}.
\end{split}\end{equation*}
Therefore, we have
\begin{equation*} \begin{split}
\bigg( & 2(k-h)(p)_h(q)_h \sum_{r=0}^{k-h} (p)_{k-h-r}(q)_r \tfrac{1}{r+1} \tbinom{k-h+1}{r} \tbinom{k-h}{r} \bigg) \left( 1 + O \left( \tfrac{k^4}{p-k+h} \right) \right) \\
& \leq Cp^kq^h2^{2(k-h)} + \widetilde{C}\frac{2^{2k-2h}k^4p^{h+k-2}q^2}{p-k+h}. 
\end{split}\end{equation*}

For the last term in the covariance formula \eqref{eq:1} the sums involved are bounded similarly to those in the second term:
\begin{equation*} \begin{split}
\sum_{r=0}^{h-l} (p)_{h-l-r}(q)_r \frac{1}{r+1} \binom{h-l+1}{r} \binom{h-l}{r} \leq \frac{e^2p^{h-l}2^{2h-2l+3/2}}{\pi (h-l+1)}
\end{split} \end{equation*}
and
\begin{equation*} \begin{split}
\sum_{s=0}^{k-l} (p)_{k-l-s}(q)_s \frac{1}{s+1} \binom{k-l+1}{s} \binom{k-l}{s} \leq \frac{e^2p^{k-l}2^{2k-2l+3/2}}{\pi (k-l+1)}.
\end{split} \end{equation*}
The last term is then bounded as follows:
\begin{equation*} \begin{split}
& \sum_{l=2}^{h-1} 2 \left( h-l \right) \left( k-l \right) (p)_l(q)_l \\
& \qquad \times \sum_{r=0}^{h-l} (p)_{h-l-r}q^{r} \tfrac{1}{r+1} \tbinom{h-l}{r} \tbinom{h-l-1}{r} \sum_{s=0}^{k-l} (p)_{k-l-s}q^{s} \tfrac{1}{s+1} \tbinom{k-l}{s} \tbinom{k-l-1}{s} \\
& \leq \frac{2^4e^4}{\pi^2} \sum_{l=2}^{h-1}(h-l)(k-l)p^lq^l\frac{p^{h+k-2l}2^{2h+2k-4l}}{(h-l+1)(k-l+1)}  \\
& \leq \frac{2^4e^4}{\pi^2} 2^{2h+2k}p^{h+k} \sum_{l=2}^{h-1} \left( \frac{q}{p} \right)^l 2^{-4l} \\
& \leq \frac{2^4e^4}{\pi^2}2^{2h+2k}p^{h+k-2}q^2 \sum_{l=2}^{h-1} \frac{1}{2^{4l}} \\
& \leq C2^{2h+2k}p^{h+k-2}q^2.
\end{split}\end{equation*}
Therefore,
\begin{equation*} \begin{split}
& \bigg( \sum_{l=2}^{h-1} 2 \left( h-l \right) \left( k-l \right) (p)_l(q)_l \\
& \qquad \times \sum_{r=0}^{h-l} (p)_{h-l-r}(q)_{r} \tfrac{1}{r+1} \tbinom{h-l}{r} \tbinom{h-l-1}{r} \sum_{s=0}^{k-l} (p)_{k-l-s}(q)_{s} \tfrac{1}{s+1} \tbinom{k-l}{s} \tbinom{k-l-1}{s} \bigg) \\
& \qquad \times \left( O \left( \frac{k^7}{p-k} \right) \right) \\
& \leq C2^{2h+2k}p^{h+k-2}q^2 + \widetilde{C}\frac{2^{2h+2k}k^7p^{h+k-2}q^2}{p-k}.
\end{split}\end{equation*}
Comparing the bounds on each of the three terms completes the lemma.
\end{proof}

\section*{Appendix} 
\label{appendix}

The following theorem gives the limit of the maximum eigenvalues of the sample covariance matrix.
\begin{lemma} [Bai-Silverstein, \cite{BaiSil} ] \label{L: max} Assume that the entries of $\{ x_{ij} \}$ are a double array of iid random variables with mean zero, variance $\sigma^2$, and finite fourth moment. Let $X_p = (x_{ij}; i \leq p, j \leq q)$ be the $p \times q$ matrix of the upper-left corner of the double array. Let $S_p = \frac{1}{p}X_p^TX_p$. If $q/p \to y \in (0, 1]$, then, with probability 1, we have 
\begin{equation*} \begin{split}
\lim_{p \to \infty} \lambda_{max}(S_p) = \sigma^2(1 + \sqrt{y})^2.
\end{split} \end{equation*}
\end{lemma}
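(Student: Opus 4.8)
The statement is the Yin--Bai--Krishnaiah theorem on the largest eigenvalue of a sample covariance matrix, and I would follow the classical route, which splits the claim into a soft lower bound and a hard upper bound. Write $b := \sigma^2(1+\sqrt{y})^2$ for the right edge of the Marchenko--Pastur support. The plan is to first reduce to bounded entries by truncation and centralization: replace each $x_{ij}$ by $\tilde x_{ij} = x_{ij}\,\mathbf{1}(|x_{ij}| \le \eta_p \sqrt{p})$ with $\eta_p \to 0$ slowly, then re-center and re-scale to unit variance. Standard rank and operator-norm perturbation inequalities for the extreme eigenvalues of $S_p$ show that this substitution does not alter the almost sure limit, and it is precisely the finite fourth moment hypothesis that makes the relevant tail sums summable so that a Borel--Cantelli argument applies.

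For the lower bound I would invoke the Marchenko--Pastur law: under the stated moment assumptions the empirical spectral distribution of $S_p$ converges almost surely to the Marchenko--Pastur law with ratio $y$, whose support has right endpoint $b$. Since that law assigns positive mass to every interval $(b-\epsilon, b]$, for large $p$ there must be at least one eigenvalue of $S_p$ in $(b-\epsilon, b]$, giving $\liminf_{p\to\infty}\lambda_{max}(S_p) \ge b - \epsilon$ almost surely; letting $\epsilon \downarrow 0$ along a countable sequence yields $\liminf \lambda_{max}(S_p) \ge b$ a.s.

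The substance is the matching upper bound, where I would use the moment (trace) method with an exponent $k = k_p$ that grows with $p$. Since $S_p$ is positive semidefinite, $\lambda_{max}(S_p)^k \le tr(S_p^k)$, so for the truncated matrix Markov's inequality gives
\begin{equation*}
\mathbb{P}\big(\lambda_{max}(S_p) > b(1+\epsilon)\big) \le \mathbb{P}\big(tr(S_p^{k}) \ge (b(1+\epsilon))^{k}\big) \le \frac{\mathbb{E}[tr(S_p^{k})]}{(b(1+\epsilon))^{k}}.
\end{equation*}
The key input is a sharp bound on $\mathbb{E}[tr(S_p^{k})] = p^{-k}\,\mathbb{E}[tr((X_p^T X_p)^{k})]$. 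Exactly as in Lemma~\ref{L: exp}, one expands this as a sum over closed bipartite walks (S-graphs): the walks with the maximal number of distinct vertices contribute the leading term $p^{-k}\sum_{r}(p)_{k-r}(q)_{r+1}\frac{1}{r+1}\binom{k}{r}\binom{k-1}{r}$, which is bounded by $q\,b^{k}(1+o(1))$ and whose $k$-th root converges to $b$, while the remaining walks are controlled by the same geometric-series estimate that produces the $O(k/(p-k))$ error there. The upshot is a bound $\mathbb{E}[tr(S_p^{k})] \le q\,b^{k}\,\mathrm{poly}(k)(1+o(1))$, so the right-hand side above is at most $q\,\mathrm{poly}(k)(1+\epsilon)^{-k}$. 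Choosing $k_p \to \infty$ fast enough (for instance a suitable multiple of $\log p$) makes this summable in $p$, and Borel--Cantelli gives $\limsup \lambda_{max}(S_p) \le b(1+\epsilon)$ a.s.; sending $\epsilon \downarrow 0$ completes the upper bound.

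The hard part will be the moment estimate for \emph{growing} $k$: I must show that the combinatorial over-counting of S-graphs with repeated or higher-multiplicity edges stays subexponential in $k$, uniformly in $p$, so that $\mathbb{E}[tr(S_p^k)]^{1/k} \to b$ rather than something strictly larger. This is exactly where the truncation is designed to help, since after truncating at $\eta_p\sqrt{p}$ the contributions of high-multiplicity vertices are damped by moment bounds of the form $\mathbb{E}[|\tilde x|^{2\ell}] \le (\eta_p\sqrt p)^{2\ell-4}\,\mathbb{E}[x^4]$, which the finite fourth moment keeps under control. The bookkeeping is the growing-$k$ analogue of the error analysis in Lemma~\ref{L: exp}, and coordinating the truncation level $\eta_p$, the exponent $k_p$, and the Borel--Cantelli summability is where the care is needed.
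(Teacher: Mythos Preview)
The paper does not actually prove this lemma: immediately after the statement it writes ``For a proof see \cite{BaiSil} Theorem 5.11.'' So there is nothing in the paper to compare against; the result is simply imported from Bai and Silverstein.

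Your outline is the standard Yin--Bai--Krishnaiah strategy and is correct as a roadmap: truncate and re-center using the fourth-moment hypothesis, obtain the lower bound from the Marchenko--Pastur law, and obtain the upper bound by the moment method with exponent $k_p \to \infty$ combined with Borel--Cantelli. One small caution on the cross-reference: Lemma~\ref{L: exp} in this paper is stated for standard Gaussian entries, so the S-graph bookkeeping there is not literally what you need for general truncated entries; in the Gaussian case the only surviving graphs are those with every edge of even multiplicity and the moment of each edge is $(2\ell-1)!!$, whereas for general entries arbitrary moments $\mathbb{E}|\tilde x|^{m}$ enter and the combinatorial classification (as in Chapter~5 of \cite{BaiSil} or \cite{YinBaiKri}) is more elaborate. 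You correctly flag this issue in your final paragraph, so just be aware that ``exactly as in Lemma~\ref{L: exp}'' overstates how directly that lemma transfers.
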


For a proof see \cite{BaiSil} Theorem 5.11.

As a direct consequence, we get the following,
\begin{lemma} \label{L: maxew}
Let $\{ p_n: n \geq 1 \}$ and $\{ q_n: n \geq 1 \}$ be two sequences of positive integers such that $q_n \leq p_n$. For each $n$, let $X_n = (x_{ij})$ be a $p_n \times q_n$ matrix where $x_{ij}$ are independent standard Gaussian random variables. If $\lambda_{max}(n)$ denotes the largest eigenvalue of the matrix $X_n^TX_n$, then, with probability 1,
\begin{equation*}
\limsup_{n \to \infty} \frac{\lambda_{max}(n)}{p_n} \leq 4.
\end{equation*}
\end{lemma}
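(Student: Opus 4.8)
The plan is to deduce the bound from the Bai--Silverstein law (Lemma \ref{L: max}) by comparing the rectangular matrix $X_n$ with the \emph{square} block of the same underlying double array, for which the limiting aspect ratio is exactly $y=1$ and hence the limiting largest eigenvalue is exactly $(1+\sqrt 1)^2=4$. The appearance of the constant $4$ in the statement is the signal that the square block is the right object: dominating $\lambda_{max}(n)$ by its value for the square block sidesteps the fact that $q_n/p_n$ need not converge.

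First I would read $\{x_{ij}\}$ as a single infinite double array, with $X_n$ its upper-left $p_n\times q_n$ corner, so that the almost sure $\limsup$ is meaningful. Let $Y_n$ denote the upper-left $p_n\times p_n$ corner of the same array, so that $X_n$ is obtained from $Y_n$ by deleting the columns indexed $q_n+1,\dots,p_n$ (recall $q_n\le p_n$). Since $\lambda_{max}(X_n^TX_n)=\|X_n\|_{op}^2$ and passing from $Y_n$ to $X_n$ restricts the operator norm to a coordinate subspace of the domain (extend any unit $v'\in\mathbb{R}^{q_n}$ by zeros to $v\in\mathbb{R}^{p_n}$, so $\|X_nv'\|=\|Y_nv\|\le\|Y_n\|_{op}$), one has $\|X_n\|_{op}\le\|Y_n\|_{op}$, hence
\begin{equation*}
\lambda_{max}(n)=\lambda_{max}(X_n^TX_n)\le\lambda_{max}(Y_n^TY_n).
\end{equation*}

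Next I would apply Lemma \ref{L: max} to the square blocks $Y_n$: their entries are standard Gaussian, so $\sigma^2=1$, the mean is zero, and all moments are finite, while the aspect ratio is $p_n/p_n=1$, giving $y=1\in(0,1]$. Writing $S_{p_n}=\tfrac{1}{p_n}Y_n^TY_n$, Lemma \ref{L: max} yields $\lambda_{max}(S_{p_n})\to(1+\sqrt1)^2=4$ almost surely as $p_n\to\infty$. Combining with the previous display,
\begin{equation*}
\limsup_{n\to\infty}\frac{\lambda_{max}(n)}{p_n}\le\limsup_{n\to\infty}\frac{\lambda_{max}(Y_n^TY_n)}{p_n}=\lim_{p_n\to\infty}\lambda_{max}(S_{p_n})=4
\end{equation*}
almost surely, which is the assertion.

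The one point requiring care is that Lemma \ref{L: max} is a statement about the almost sure limit as the matrix dimension tends to infinity; it is therefore invoked along the sequence $\{p_n\}$ on the probability-one event on which $\lambda_{max}(\tfrac1p Y_p^TY_p)\to4$ as $p\to\infty$ through all integers, which presumes $p_n\to\infty$ (the regime in which the $\limsup$ bound is actually needed in Section \ref{sec: Main Proof}). I expect the only genuinely substantive step to be the column-deletion monotonicity $\|X_n\|_{op}\le\|Y_n\|_{op}$; once this square domination is in hand the conclusion is an immediate application of Lemma \ref{L: max} with $y=1$, and in particular the possible non-convergence of $q_n/p_n$ never enters the argument.
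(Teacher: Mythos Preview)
Your proposal is correct and follows essentially the same route as the paper: augment the rectangular $X_n$ to the square $p_n\times p_n$ block of the same double array, use monotonicity of the operator norm under column deletion, and then invoke Bai--Silverstein with $y=1$ to get the constant $4$. Your write-up is in fact more explicit than the paper's, which simply says that the singular values of $X_n$ are dominated by those of an augmented square matrix $(X_n,Y)$; you also correctly flag the implicit assumption $p_n\to\infty$, which the paper's proof shares.
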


\begin{proof}
If $p_n = q_n$, the lemma follows immediately from Lemma \ref{L: max}. 
If $p_n > q_n$ and $Y$ is an arbitrary $p_n \times (p_n-q_n)$ matrix, then the singular values of $X_n$ are no more than the singular values of the square matrix $(X_n, Y)$. The lemma then follows in general.
\end{proof}

The following uniform version of Stirling's approximation is used in Lemma \ref{L: cov2} and Lemma \ref{L: cov1}.

\begin{lemma} \label{L: stirling} For each positive integer $n$,
\begin{equation*}
\sqrt{2\pi}n^{n+\frac{1}{2}}e^{-n} \leq n! \leq en^{n+\frac{1}{2}}e^{-n}.
\end{equation*}
\end{lemma}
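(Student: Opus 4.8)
The plan is to reduce both inequalities to monotonicity and a single limit computation for the sequence
\[
d_n = \log(n!) - \left(n + \tfrac{1}{2}\right)\log n + n,
\]
since $n! = e^{d_n}\, n^{n+1/2} e^{-n}$, and the asserted bounds are \emph{exactly} the statement that $\tfrac{1}{2}\log(2\pi) \le d_n \le 1$ for every $n$. First I would show that $\{d_n\}$ is strictly decreasing. Computing the consecutive difference gives $d_n - d_{n+1} = (n+\tfrac12)\log\frac{n+1}{n} - 1$; substituting $t = (2n+1)^{-1}$, so that $\frac{n+1}{n} = \frac{1+t}{1-t}$ and $n + \tfrac12 = \tfrac{1}{2t}$, and expanding $\log\frac{1+t}{1-t} = 2\sum_{k\ge 0} \frac{t^{2k+1}}{2k+1}$, I obtain
\[
d_n - d_{n+1} = \sum_{k \ge 1} \frac{t^{2k}}{2k+1} > 0 .
\]
The upper bound is then immediate: since $d_n$ is decreasing, $d_n \le d_1 = \log(1!) - \tfrac32\log 1 + 1 = 1$, which is precisely $n! \le e\, n^{n+1/2} e^{-n}$.

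For the lower bound I would first show $\{d_n\}$ converges. Bounding the same series by a geometric one, $\sum_{k\ge 1}\frac{t^{2k}}{2k+1} < \frac13 \cdot \frac{t^2}{1-t^2} = \frac{1}{12n(n+1)} = \frac{1}{12}\left(\frac1n - \frac1{n+1}\right)$, shows that $d_n - \frac{1}{12n}$ is \emph{increasing}. Thus $d_n$ (decreasing) and $d_n - \frac1{12n}$ (increasing) sandwich a common limit $C$, and in particular $d_n \ge C$ for every $n$. It then remains only to identify $C$. Writing $a_n = e^{d_n}$, so that $n! = a_n\, n^{n+1/2}e^{-n}$ with $a_n \to e^C$, I would substitute this asymptotic into Wallis's product: a direct cancellation gives $\frac{(m!)^2 2^{2m}}{(2m)!\sqrt m} \to \frac{e^C}{\sqrt 2}$, and since the Wallis limit equals $\sqrt\pi$ one concludes $e^C = \sqrt{2\pi}$, i.e.\ $C = \tfrac12\log(2\pi)$. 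Hence $d_n \ge \tfrac12\log(2\pi)$, which is exactly the lower bound $n! \ge \sqrt{2\pi}\, n^{n+1/2} e^{-n}$.

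The monotonicity manipulations are entirely routine power-series estimates. The one genuinely substantive step, and the main obstacle, is pinning down the limiting constant as $\sqrt{2\pi}$: this cannot be extracted from monotonicity alone and requires an independent input such as Wallis's product (itself obtained from the recursion for $\int_0^{\pi/2}\sin^{2m}x\,dx$) or, equivalently, the value of the Gaussian integral. Everything else is elementary real analysis, and the two bounds emerge cleanly as the endpoint value $d_1 = 1$ and the limit $\lim_n d_n = \tfrac12\log(2\pi)$ of the same decreasing sequence.
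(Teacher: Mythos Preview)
Your argument is correct; this is the classical derivation of the quantitative Stirling bounds via the monotonicity of $d_n$ and identification of the limiting constant through Wallis's product. The paper itself does not supply an independent proof but simply cites equation~(9.15) in Feller, whose treatment is essentially the same argument you have written out, so your approach coincides with the source the paper invokes.
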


The proof follows from equation (9.15) in \cite{Feller}.

\begin{lemma} \label{L: sum}
Let $p_nq_n = o(n)$ and $l = \frac{\log p_n}{\log (n/p_nq_n)}$. Then $\sum_{j=1}^l \Big( \frac{Cp_nq_n}{n} \Big)^j \frac{1}{j}$ converges to 0 as $n \to \infty$ for any constant $C$.
\end{lemma}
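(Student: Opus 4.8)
The plan is to recognize that the finite sum is dominated by a convergent geometric series whose ratio tends to zero, so that the growing upper limit $l$ plays no essential role. First I would set $x_n := \frac{Cp_nq_n}{n}$ and observe that the hypothesis $p_nq_n = o(n)$ forces $|x_n| = \frac{|C|p_nq_n}{n} \to 0$ as $n \to \infty$; in particular $|x_n| \leq \frac{1}{2}$ for all sufficiently large $n$, so that the geometric series below converges.

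The key step is then the chain of uniform bounds
\begin{equation*}
\left| \sum_{j=1}^l \frac{x_n^j}{j} \right| \leq \sum_{j=1}^l \frac{|x_n|^j}{j} \leq \sum_{j=1}^l |x_n|^j \leq \sum_{j=1}^\infty |x_n|^j = \frac{|x_n|}{1 - |x_n|},
\end{equation*}
where the second inequality uses $\frac{1}{j} \leq 1$, the third drops the upper cutoff (all terms being nonnegative), and the final equality is the geometric series formula, valid because $|x_n| < 1$. Since the right-hand side no longer depends on $l$ and tends to $0$ as $|x_n| \to 0$, letting $n \to \infty$ gives the claimed convergence.

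The only point requiring care — and the reason this estimate is worth isolating as a lemma — is that the number of summands $l = \frac{\log p_n}{\log(n/p_nq_n)}$ may itself grow with $n$, so one cannot argue termwise that each of finitely many contributions vanishes. The device that circumvents this is precisely to discard the upper cutoff and pass to the full series $\sum_{j=1}^\infty |x_n|^j$, whose value is controlled by the single small quantity $|x_n|$. No finer information about $l$ (whether it is an integer, or its precise rate of growth) is needed, and the argument is insensitive to the sign of the constant $C$, since everything is phrased through $|x_n|$. I expect no genuine obstacle here beyond correctly handling the growing index of summation via the geometric majorant.
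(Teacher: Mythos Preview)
Your argument is correct and follows essentially the same route as the paper: both proofs observe that $x_n := Cp_nq_n/n \to 0$, discard the upper cutoff $l$ by majorizing with the full infinite series, and then evaluate that series in closed form to get a bound tending to zero. The only cosmetic difference is that the paper keeps the factor $1/j$ and identifies the series as $-\log(1-x_n)$, whereas you drop $1/j$ first and use the plain geometric series $|x_n|/(1-|x_n|)$; your version has the minor advantage of handling either sign of $C$ without comment.
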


\begin{proof}
Since $\frac{Cpq}{n} \to 0$,
\begin{align*}
\sum_{j=1}^l \Bigg( \frac{Cpq}{n} \Bigg)^j \frac{1}{j} & \leq \sum_{j=1}^{\infty} \Bigg( \frac{Cpq}{n} \Bigg)^j \frac{1}{j} = -\log \left( 1 - \frac{Cpq}{n} \right) \to 0,
\end{align*}
so that $\sum_{j=1}^l \Big( \frac{Cpq}{n} \Big)^j \frac{1}{j}$ converges to 0 as $n \to \infty$ for any constant $C$. 
\end{proof}

\begin{lemma} \label{L: Kn}
Let $p_nq_n = o(n)$. Set 
\begin{equation*}
K_n = \left( \frac{2}{n} \right)^{p_nq_n/2} \prod_{j=1}^{q_n} \frac{\Gamma ((n-j+1)/2)}{\Gamma ((n-p_n-j+1)/2)}. 
\end{equation*}
Then
\begin{equation*}
\log(K_n) = - \frac{p_nq_n^2}{4n} - \sum_{k=1}^{l-1} \frac{p_n^{k+1}q_n}{2(k+1)kn^k} - \frac{p_n^{l+1}q_n}{2ln^l}+ o(1),
\end{equation*}
as $n$ tends to infinity.
\end{lemma}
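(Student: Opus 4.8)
The plan is to take logarithms and reduce everything to the asymptotics of a difference of log-Gamma functions, to which Stirling's series applies. Abbreviating $p=p_n$, $q=q_n$, I would start from
\[
\log K_n = -\frac{pq}{2}\log\frac{n}{2} + \sum_{j=1}^{q}\left[\log\Gamma\left(\tfrac{n-j+1}{2}\right) - \log\Gamma\left(\tfrac{n-p-j+1}{2}\right)\right]
\]
and insert the Stirling expansion $\log\Gamma(z) = (z-\tfrac12)\log z - z + \tfrac12\log(2\pi) + \tfrac{1}{12z} - \cdots$ into each term. The constants $\tfrac12\log(2\pi)$ cancel inside every difference, and the Bernoulli/correction tail contributes, after summing over $j$, only $O(pq/n^2)=o(1)$, since each paired difference of correction terms is $O(p/n^2)$; this is the first place I expect to use $pq=o(n)$.

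The heart of the computation is the leading part $(z-\tfrac12)\log z - z$. I would write $\log\tfrac{n-j+1}{2} = \log\tfrac n2 + \log(1-\tfrac{j-1}{n})$ and similarly with $\tfrac{p+j-1}{n}$ for the second argument. The coefficient of $\log\tfrac n2$ in each summand is exactly $\tfrac p2$, so summing over $j$ produces $\tfrac{pq}{2}\log\tfrac n2$, which cancels the prefactor; the linear $-z$ terms contribute $-\tfrac{pq}{2}$, cancelled by the first term of the $\log(1-\cdot)$ expansions. Using $(1-t)\log(1-t) = -t + \sum_{k\ge2}\tfrac{t^k}{(k-1)k}$, what remains is, for each $j$,
\[
\frac{n}{2}\sum_{k\ge 2}\frac{x_j^{k}-y_j^{k}}{(k-1)k} + \frac12\sum_{k\ge1}\frac{x_j^{k}-y_j^{k}}{k}, \qquad x_j = \tfrac{j-1}{n}, \ y_j = \tfrac{p+j-1}{n}.
\]

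Next I would sum over $j$ and reorganize by powers of $1/n$. The key quantity is $\sum_{j=1}^{q}(x_j^{k}-y_j^{k}) = -n^{-k}\sum_{i=0}^{q-1}\big[(p+i)^k - i^k\big]$, which I would expand by the binomial theorem as $-n^{-k}\sum_{m\ge1}\binom{k}{m}p^m P_{k-m}(q)$, where $P_r(q)=\sum_{i=0}^{q-1}i^r \sim q^{r+1}$ is a power sum. The dominant contribution $m=k$ gives $q\,p^k$, and feeding it into the $\tfrac n2\sum_k\tfrac1{(k-1)k}$ sum yields, after reindexing $k=m+1$, exactly $-\sum_{m\ge1}\tfrac{p^{m+1}q}{2m(m+1)n^m}$; the next correction $m=k-1$ produces the term $-\tfrac{pq^2}{4n}$ (from $k=2$) and otherwise lower-order pieces. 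The remaining parts (the second sum above, and every sub-leading power-sum contribution for $k\ge3$) are bounded by geometric-type series in $pq/n$ and $p/n$, whose truncated sums vanish by Lemma~\ref{L: sum}; here $q\le p$ together with $pq=o(n)$ is what makes these error series summable and $o(1)$.

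The final step is to truncate the main series at order $l$: I would split $\sum_{m\ge1}=\sum_{m=1}^{l-1}+\sum_{m\ge l}$ and estimate the tail, which is geometric with ratio $O(p/n)$ and hence controlled by its leading term at order $n^{-l}$, producing the isolated final term. The main obstacle throughout is \emph{uniformity as $l\to\infty$ with $n$}: because $l$ grows, the $1/n$-series cannot be treated as a fixed finite sum, so the error analysis must produce bounds on the tail and on all sub-leading cross terms that are summable and tend to zero simultaneously. This is precisely what the choice $l=\log p/\log(n/pq)$ and Lemma~\ref{L: sum} are designed to deliver, and verifying that every discarded contribution is genuinely $o(1)$ under these hypotheses is the delicate part of the argument.
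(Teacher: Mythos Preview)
Your overall strategy—Stirling for the log–Gamma differences, then Taylor-expand the resulting $\log(1-\cdot)$'s and control the lower-order pieces via Lemma~\ref{L: sum}—is exactly the paper's, and your identification of the main series $-\sum_{m\ge1}\tfrac{p^{m+1}q}{2m(m+1)n^m}$ and of the $-\tfrac{pq^2}{4n}$ term is correct.  The one organizational difference is that the paper does not expand $(1-t)\log(1-t)$ as an infinite series; instead it first rewrites $\log\tfrac{n-j+1}{n-p-j+1}=-\log(1-\tfrac{p}{n})+\log\bigl(1+\tfrac{p(j-1)}{n(n-p-j+1)}\bigr)$, so that the leading piece is independent of $j$, and then Taylor-expands $-\log(1-\tfrac{p}{n})$ to order exactly $l$ with Lagrange remainder.

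That difference matters at your final step.  The isolated term $-\tfrac{p^{l+1}q}{2ln^l}$ in the lemma is \emph{not} the tail of the series $\sum_{m\ge1}\tfrac{p^{m+1}q}{2m(m+1)n^m}$; it is the boundary term that appears when one stops the finite expansion at $k=l$ and telescopes $\sum_{k=1}^{l}\tfrac{p^kq}{2kn^{k-1}}-\sum_{k=1}^{l}\tfrac{p^{k+1}q}{2kn^k}$.  If you pass to infinite series and then truncate, the tail's leading term is $\tfrac{p^{l+1}q}{2l(l+1)n^l}$, and the discrepancy $\tfrac{p^{l+1}q}{2(l+1)n^l}$ need not be $o(1)$ under the hypotheses (take $q$ bounded and $l$ bounded with $p^{l+1}\asymp n^{l}$).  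So ``estimate the geometric tail, producing the isolated final term'' does not work as written.  The fix is simply to use the order-$l$ Taylor polynomial with remainder from the outset, as the paper does; then the coefficient $\tfrac{1}{2l}$ appears as the last retained term rather than as a tail bound, and it is precisely this form that is engineered to cancel against the $E_l$ contribution in Lemma~\ref{L: Cancel}.
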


\begin{proof}
Rewriting, 
\begin{equation*} \begin{split}
\log (K_n) &= \frac{pq}{2} \log \left( \frac{2}{n} \right) + \sum_{j=1}^q  \log \left( \frac{ \Gamma ((n-j+1)/2)}{\Gamma ((n-p-j+1)/2)} \right) 
\end{split} \end{equation*}
Lemma 5.1 in \cite{JiaQi} gives the following
\begin{equation*} \begin{split}
\log (K_n) & = \frac{pq}{2}\log \left( \frac{2}{n} \right)\\
& +\sum_{j=1}^q \bigg[ \frac{n-j+1}{2} \log \left( \frac{n-j+1}{2} \right) \\
& \qquad \quad - \frac{n-p-j+1}{2} \log \left( \frac{n-p-j+1}{2} \right) - \frac{p}{2}\\
& \qquad \quad - \frac{p}{2(n-p-j+1)} + O \left( \frac{p^2 + 4}{(n-p-j+1)^2} \right) \bigg].
\end{split} \end{equation*}
Using the fact that $pq = o(n)$ for the last two terms,
\begin{equation*} \begin{split}
\log (K_n) & = \frac{pq}{2}\log \left( \frac{2}{n} \right)\\
& + \sum_{j=1}^q \bigg[ \left( \frac{n-p-j+1}{2} + \frac{p}{2} \right) \log \left( \frac{n-j+1}{2} \right) \\
& \qquad \quad - \frac{n-p-j+1}{2} \log \left( \frac{n-p-j+1}{2} \right) - \frac{p}{2}  \bigg] + o(1)\\
& = \frac{-pq}{2} + \frac{pq}{2}\log \left( \frac{2}{n} \right) \\
& + \sum_{j=1}^q \bigg[ \frac{n-p-j+1}{2} \log \left( \frac{n-j+1}{n-p-j+1} \right)\\
& \qquad \qquad \qquad + \frac{p}{2} \log \left( \frac{n-j+1}{2} \right) \bigg] + o(1). \\
\end{split} \end{equation*}
Bringing $\frac{pq}{2}\log \left( \frac{2}{n} \right)$ inside the sum yields
\begin{equation*} \begin{split}
\log(K_n)& = \frac{-pq}{2} + \sum_{j=1}^q \bigg[ \frac{n-p-j+1}{2} \log \left( \frac{n-j+1}{n-p-j+1} \right)\\
& \qquad \qquad \qquad \qquad + \frac{p}{2} \log \left( \frac{n-j+1}{n} \right) \bigg] + o(1). \\
\end{split} \end{equation*}
Rewriting the logarithms 
\begin{equation*} \begin{split}
\log \left( \frac{n-j+1}{n-p-j+1} \right) 
& = \log \left( \frac{1 + \frac{p(j-1)}{n(n-p-j+1)}}{1 - \frac{p}{n}} \right) \\
& = - \log \left(1 - \frac{p}{n} \right) + \log \left( 1 + \frac{p(j-1)}{n(n-p-j+1)} \right)
\end{split}\end{equation*}
and 
\begin{equation*}
\log \left( \frac{n-j+1}{n} \right) = \log \left( 1 - \frac{j-1}{n} \right) 
\end{equation*}
yields
\begin{equation*} \begin{split}
\log (K_n) & = \frac{-pq}{2} \\
& + \sum_{j=1}^q \bigg[ -\frac{n-p-j+1}{2} \log \left(1 - \frac{p}{n} \right) \\ 
& \qquad \qquad + \frac{n-p-j+1}{2}\log \left( 1 + \frac{p(j-1)}{n(n-p-j+1)} \right) \\
& \qquad \qquad + \frac{p}{2}\log \left( 1 - \frac{j-1}{n} \right) \bigg] + o(1).
\end{split} \end{equation*}
For the first term
\begin{equation*} \begin{split}
-\log & \left(1 - \frac{p}{n} \right)\sum_{j=1}^q \frac{n-p-j+1}{2} = - \log \left(1 - \frac{p}{n} \right) \left( \frac{(n-p)q}{2} - \frac{q(q-1)}{4} \right). 
\end{split}\end{equation*}
By Taylor's Theorem there exists some $\xi_p \in (0,p)$ such that
\begin{equation*} \begin{split}
- \log \left(1 - \frac{p}{n} \right) & = \sum_{k=1}^l \frac{1}{k} \left( \frac{p}{n} \right)^k + \frac{p^{l+1}}{(l+1)(\xi_p - n)^{l+1}}.
\end{split}\end{equation*}
Now $\xi_p = o(n)$ so the error term in the expansion is 
\begin{equation*} \begin{split}
 \left( \frac{(n-p)q}{2} - \frac{q(q-1)}{4} \right) \left( \frac{p^{l+1}}{(l+1)(\xi_p - n)^{l+1}} \right)  & \leq \frac{p^{l+1}(n-p)q}{(l+1)(\xi_p - n)^{l+1}} \\
& \leq \frac{C p^{l+1}q}{(l+1)(\xi_p - n)^{l}}\\
& = o(1)
\end{split} \end{equation*}
by choice of $l$. (See equation \eqref{eq: 2} for a similar computation.) The first term is thus
\begin{equation*} \begin{split}
& \left( \frac{(n-p)q}{2} - \frac{q(q-1)}{4} \right)\sum_{k=1}^l \frac{1}{k} \left( \frac{p}{n} \right)^k + o(1) \\
& = \sum_{k=1}^l \frac{p^kq}{2kn^{k-1}} - \sum_{k=1}^l \frac{p^{k+1}q}{2kn^k} - \sum_{k=1}^l \frac{q^2}{4k} \left( \frac{p}{n} \right)^k + \sum_{k=1}^l \frac{q}{4k} \left( \frac{p}{n} \right)^k + o(1).
\end{split} \end{equation*}
Notice that
\begin{equation*}\begin{split}
\sum_{k=2}^l \frac{q^2}{4k} & \left( \frac{p}{n} \right)^k + \sum_{k=1}^l \frac{q}{4k} \left( \frac{p}{n} \right)^k \leq \frac{1}{2} \sum_{k=1}^{\infty} \frac{1}{k} \left( \frac{pq}{n} \right)^k  = o(1)
\end{split}\end{equation*}
by Lemma \ref{L: sum}. Therefore the first term is
\begin{equation*} \begin{split}
& \frac{pq}{2} + \sum_{k=1}^{l-1} \frac{p^{k+1}q}{2(k+1)n^k} - \sum_{k=1}^l \frac{p^{k+1}q}{2kn^k} -\frac{pq^2}{4n} + o(1) \\
& = \frac{pq}{2} - \frac{pq^2}{4n} - \sum_{k=1}^{l-1} \frac{p^{k+1}q}{2(k+1)kn^k} - \frac{p^{l+1}q}{2ln^l}+ o(1).
\end{split}\end{equation*}
For the second term there exist $\xi_{p,j} \in \left( 0, \frac{p(j-1)}{n-p-j+1} \right)$ so that
\begin{equation*} \begin{split}
\sum_{j=1}^q  & \frac{n-p-j+1}{2}\log \left( 1 + \frac{p(j-1)}{n(n-p-j+1)} \right) \\
& = \sum_{j=1}^q \frac{n-p-j+1}{2} \bigg[ \sum_{k=1}^l (-1)^{k+1} \frac{1}{k} \left( \frac{p(j-1)}{n(n-p-j+1)} \right)^k \\
& \qquad + \frac{ p^{l+1}(j-1)^{l+1}} {(l+1) (n-p-j+1)^{l+1} (\xi_{p,j} - n)^{l+1}} \bigg]. 
\end{split} \end{equation*}
Notice that
\begin{equation*} \begin{split}
\sum_{j=1}^q \frac{ p^{l+1}(j-1)^{l+1}} {(l+1) (n-p-j+1)^{l+1} (\xi_{p,j} - n)^{l+1}} & = o(1)
\end{split} \end{equation*}
by choice of $l$. The second term is thus
\begin{equation*} \begin{split}
& \sum_{j=1}^q \frac{p(j-1)}{2n} + \sum_{j=1}^q \frac{n-p-j+1}{2} \sum_{k=2}^l (-1)^{k+1} \frac{1}{k} \left( \frac{p(j-1)}{n(n-p-j+1)} \right)^k + o(1) \\
& = \frac{pq^2}{4n} + o(1). 
\end{split} \end{equation*}
For the third term, there exist $\xi_{j-1} \in (0, j-1)$ such that 
\begin{equation*} \begin{split}
\sum_{j=1}^q\frac{p}{2}\log \left( 1 - \frac{j-1}{n} \right) & = - \sum_{j=1}^q \frac{p}{2} \left[ \sum_{k=1}^l \frac{1}{k} \left( \frac{j-1}{n} \right)^k + \frac{(j-1)^{l+1}}{(l+1)(\xi_{j-1} -n)^{l+1}} \right] \\
& = - \sum_{j=1}^q \frac{p}{2} \frac{j-1}{n} - \sum_{j=1}^q\frac{p}{2}\sum_{k=2}^l \frac{1}{k} \left( \frac{j-1}{n} \right)^k \\
& \qquad + \frac{p}{2}\sum_{j=1}^q \frac{(j-1)^{l+1}}{(l+1)(\xi_{j-1} -n)^{l+1}}.
\end{split}\end{equation*}
Now observe that 
\begin{equation*} \begin{split}
\sum_{j=1}^q\frac{p}{2}\sum_{k=2}^l \frac{1}{k} \left( \frac{j-1}{n} \right)^k & \leq \frac{pq}{2} \sum_{k=2}^l \frac{1}{k} \left( \frac{q}{n} \right)^k \\
& \leq \frac{p^2}{2} \sum_{k=2}^l \frac{1}{k} \left( \frac{q}{n} \right)^k \\
& \leq \sum_{k=2}^l \frac{1}{2k} \left( \frac{pq}{n} \right)^k \\
& = o(1)
\end{split} \end{equation*}
and
\begin{equation*} \begin{split}
\frac{p}{2} \sum_{j=1}^q \frac{(j-1)^{l+1}}{(l+1)(\xi_{j-1} -n)^{l+1}} & \leq \frac{p}{2} \sum_{j=1}^q \frac{q^{l+1}}{(l+1)(\xi_{j-1} -n)^{l+1}} \\
& = o(1).
\end{split}\end{equation*} 
Thus the third term is 
\begin{equation*} \begin{split}
\sum_{j=1}^q\frac{p}{2}\log \left( 1 - \frac{j-1}{n} \right) & = \frac{-pq^2}{4n} +o(1).
\end{split} \end{equation*} 
Putting everything together,
\begin{equation*} \begin{split}
\log (K_n) = - \frac{pq^2}{4n} - \sum_{k=1}^{l-1} \frac{p^{k+1}q}{2(k+1)kn^k} - \frac{p^{l+1}q}{2ln^l}+ o(1).
\end{split} \end{equation*}
\end{proof}

Recall that the notation $(x)_a$ represents the  falling factorial, which is defined by
\begin{equation*}
(x)_a = x (x-1)(x-2) \cdots (x-a+1).
\end{equation*}

\begin{lemma} \label{L: Ej}
Let $ \{ p_n : n \geq 1 \}$ and $ \{q_n : n \geq 1 \}$ be two sequences of positive integers such that $p_nq_n = o(n)$ and $q_n \leq p_n$. For each n, let $X_n = (x_{ij})$ be a $p_n \times q_n$ matrix where $x_{ij}$ are independent standard Gaussian random variables. Let 
\begin{equation*}
E_j =  \mathbb{E} \bigg[ \frac{1}{n^j} \bigg( \frac{p_n+q_n+1}{2j} tr(X^TX)^j - \frac{1}{2(j+1)} tr(X^TX)^{j+1} \bigg) \bigg]
\end{equation*}
when $j \leq l-1$ and 
\begin{equation*}
E_l = \mathbb{E} \bigg[ \frac{1}{n^l} \frac{p_n+q_n+1}{2l} tr(X^TX)^l \bigg].
\end{equation*}
Then
\begin{equation*}
\sum_{j=1}^l E_j = \frac{p_nq_n^2}{4n}+\sum_{j=1}^{l-1} \frac{(p_n)_{j}p_nq_n}{2j(j+1)n^j} + \frac{(p_n)_{l}p_nq_n}{2ln^l} + o(1) .
\end{equation*}
\end{lemma}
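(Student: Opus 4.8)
The plan is to substitute the moment formula of Lemma \ref{L: exp} into each $E_j$ and then identify which terms survive after summation, following the same bookkeeping used in the proof of Lemma \ref{L: Kn}. By Lemma \ref{L: exp}, for every $h \geq 1$ we may write $\mathbb{E}[tr(X^TX)^h] = S_h \left( 1 + O\left( \frac{h}{p-h} \right) \right)$, where $S_h = \sum_{r=0}^{h-1}(p)_{h-r}(q)_{r+1}\frac{1}{r+1}\binom{h}{r}\binom{h-1}{r}$ and the implicit constant is independent of $h,p,q$; since $l = o(p)$, this expansion is valid uniformly for $h \leq l+1$.

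First I would handle $j=1$ exactly. Using $\mathbb{E}[tr(X^TX)] = pq$ and $\mathbb{E}[tr(X^TX)^2] = pq(p+q+1)$ gives $E_1 = \frac{pq(p+q+1)}{4n} = \frac{pq^2}{4n} + \frac{(p)_1 pq}{4n} + O\left( \frac{pq}{n} \right)$, and $\frac{pq}{n} = o(1)$ since $pq = o(n)$. This single term produces both the isolated $\frac{pq^2}{4n}$ of the statement and the $j=1$ summand $\frac{(p)_1 pq}{4n}$. For $2 \leq j \leq l-1$ I would split $S_h$ into its two extreme summands and a middle piece, $S_h = (p)_h q + p(q)_h + S_1^{(h)}$, where the middle sum obeys $0 \leq S_1^{(h)} \leq \frac{2^{2h}p^{h-1}q^2}{h^{3/2}}$ exactly as in \eqref{eq:3}. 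The main contribution to $E_j$ comes from pairing the $(p)_j q$ piece of $\mathbb{E}[tr(X^TX)^j]$ against the $(p)_{j+1}q = (p)_j(p-j)q$ piece of $\mathbb{E}[tr(X^TX)^{j+1}]$; carrying out the algebra on $\frac{p+q+1}{2j}(p)_j q - \frac{1}{2(j+1)}(p)_j(p-j)q$ isolates the surviving term $\frac{(p)_j pq}{2j(j+1)n^j}$, with everything else of strictly smaller order. The endpoint $E_l$ is treated identically, keeping only $\frac{p+q+1}{2l}(p)_l q/n^l$, whose leading part is $\frac{(p)_l pq}{2ln^l}$.

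The bulk of the work, and the main obstacle, is showing that all remaining contributions sum to $o(1)$. These are of four types: the $p(q)_h$ extreme pieces, the middle sums $S_1^{(h)}$, the $O(h/(p-h))$ relative errors, and the subleading terms left over from the algebraic simplification. Because $l \to \infty$, none of these can be discarded term by term; instead each is dominated by a geometric-type series in $j$, and I would bound the total using $p/n \to 0$, the inequalities $q \leq p$ and $q^2 \leq pq = o(n)$, and above all Lemma \ref{L: sum}, which controls sums of the form $\sum_j (Cpq/n)^j \tfrac{1}{j}$. A key subtlety is that the middle sums $S_1^{(h)}$ vanish for $h \leq 2$, so the corresponding series effectively begins at order $(pq/n)^2$; this is what forces the error to be $o(1)$ even though $pq^2/n$ itself need not tend to zero. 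Collecting the surviving main terms over $j = 1, \dots, l$ then yields the stated identity.
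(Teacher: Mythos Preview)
Your proposal is correct and follows essentially the same route as the paper: substitute Lemma~\ref{L: exp}, strip off the leading summand $(p)_h q$ from the moment sum, do the cancellation $\frac{p+q+1}{2j}(p)_jq-\frac{1}{2(j+1)}(p)_{j+1}q=\frac{(p)_jpq}{2j(j+1)}+\text{(lower order)}$, and then control the accumulated remainder over $j$ via the bound \eqref{eq:3} together with Lemma~\ref{L: sum}. Your three-piece split $(p)_hq+p(q)_h+S_1^{(h)}$ and your explicit tracking of the $O(h/(p-h))$ relative error from Lemma~\ref{L: exp} are slightly more fastidious than the paper (which folds $p(q)_h$ into the remainder and silently absorbs the $O(h/(p-h))$ factor), but the substance is the same.
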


\begin{proof}
When $j=1$,
\begin{equation*} \begin{split}
E_1&  =  \frac{p+q+1}{2n}pq -  \frac{p^2q + pq^2 - 2pq}{4n} \\
& = \frac{p^2q + pq^2}{4n} + o(1),
\end{split}\end{equation*}
since $pq = o(n)$. When $j=l$: 
\begin{equation*} \begin{split}
E_l & = \frac{p+q+1}{2ln^l} \sum_{s=0}^{l-1} (p)_{l-s}(q)_{s+1}\frac{1}{s+1} \binom{l}{s} \binom{l-1}{s} \\
&= \frac{(p)_{l}pq}{2ln^l} + o(1), 
\end{split} \end{equation*}
by computations similar to equation \eqref{eq:3} in Section 3. For $1 < j < l$,
\begin{align*}
E_j  & = \mathbb{E} \bigg[ \frac{1}{n^j} \bigg( \frac{p+q+1}{2j} tr(X^TX)^j - \frac{1}{2(j+1)} tr(X^TX)^{j+1} \bigg) \bigg] \\
& = \frac{p+q+1}{2jn^j} \mathbb{E} [tr(X^TX)^j] - \frac{1}{2(j+1)n^j} \mathbb{E}[tr(X^TX)^{j+1}] \\
& = \frac{p+q+1}{2jn^j}(p)_jq - \frac{1}{2(j+1)n^j}(p)_{j+1}q+ \frac{p+q+1}{2jn^j}S_1 - \frac{1}{2(j+1)n^j}S_2,  
\end{align*}
where 
\begin{align*}
0 \leq S_1 &  = \sum_{s=1}^{j-1} (p)_{j-s}(q)_{s+1} \frac{1}{s+1} \binom{j}{s} \binom{j-1}{s} \leq \frac{Cp^{j-1}q^22^{2j}}{j^{3/2}}
\end{align*}
 and 
 \begin{align*}
 0 \leq S_2 & = \sum_{s=1}^j (p)_{j+1-s}(q)_{s+1}\frac{1}{s+1}  \binom{j+1}{s} \binom{j}{s}  \leq  \frac{Cp^jq^22^{2j+2}}{(j+1)^{3/2}},
 \end{align*}
by computations similar to equation \eqref{eq:3}. Therefore
\begin{align*}
E_j & = (p)_jq \left( \frac{p+q+1}{2jn^j} - \frac{p-j}{2(j+1)n^j} \right) + \frac{p+q+1}{2jn^j}S_1 - \frac{1}{2(j+1)n^j}S_2  \\
& = \frac{(p)_{j}pq}{2j(j+1)n^j} +  D_j,
\end{align*} 
where 
\begin{align*}
D_j & = (p)_jq \frac{q + 1 + j^2 +qj + j}{2j(j+1)n^j} + \frac{p+q+1}{2jn^j}S_1 - \frac{1}{2(j+1)n^j}S_2,
\end{align*}
and thus
\begin{align*}
|D_j| & \leq \frac{q + 1 + j^2 +qj + j}{2j(j+1)n^j} + \frac{Cp^{j}q^22^{2j}}{j^{5/2}n^j} + \frac{Cp^jq^22^{2j}}{(j+1)^{5/2}n^j}.
\end{align*}
Summing over $1 < j <l$ and using Lemma \ref{L: sum},
\begin{align*}
\sum_{j=2}^{l-1} E_j & = \sum_{j=2}^{l-1} \left( \frac{(p)_{j}pq}{2j(j+1)n^j} + D_j \right) \\
& = \sum_{j=2}^{l-1} \frac{(p)_{j}pq}{2j(j+1)n^j} + o(1).
\end{align*}
\end{proof}

\begin{lemma} \label{L: Cancel}
Let $ \{ p_n : n \geq 1 \}$ and $ \{q_n : n \geq 1 \}$ be two sequences of positive integers such that $p_nq_n = o(n)$ and $q_n \leq p_n$. For each n, let $X_n = (x_{ij})$ be a $p_n \times q_n$ matrix where $x_{ij}$ are independent standard Gaussian random variables. Let 
\begin{equation*}
E_j =  \mathbb{E} \bigg[ \frac{1}{n^j} \bigg( \frac{p_n+q_n+1}{2j} tr(X^TX)^j - \frac{1}{2(j+1)} tr(X^TX)^{j+1} \bigg) \bigg]
\end{equation*}
when $j \leq l-1$ and 
\begin{equation*}
E_l = \mathbb{E} \bigg[ \frac{1}{n^l} \frac{p_n+q_n+1}{2l} tr(X^TX)^l \bigg].
\end{equation*}
Let 
\begin{equation*}
K_n = \left( \frac{2}{n} \right)^{p_nq_n/2} \prod_{j=1}^{q_n} \frac{\Gamma ((n-j+1)/2)}{\Gamma ((n-p_n-j+1)/2)}. 
\end{equation*} Then
\begin{equation*}
\left( \log (K_n) + \sum_{j=1}^l E_j \right) = o(1).
\end{equation*}
\end{lemma}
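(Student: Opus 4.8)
The plan is to add the two expansions already in hand---Lemma \ref{L: Kn} for $\log(K_n)$ and Lemma \ref{L: Ej} for $\sum_{j=1}^l E_j$---and verify that their leading terms cancel, so that what remains is $o(1)$. Writing $p=p_n$ and $q=q_n$, the two lemmas read
\[
\log(K_n) = - \frac{pq^2}{4n} - \sum_{k=1}^{l-1} \frac{p^{k+1}q}{2(k+1)kn^k} - \frac{p^{l+1}q}{2ln^l}+ o(1)
\]
and
\[
\sum_{j=1}^l E_j = \frac{pq^2}{4n}+\sum_{j=1}^{l-1} \frac{(p)_{j}pq}{2j(j+1)n^j} + \frac{(p)_{l}pq}{2ln^l} + o(1).
\]
On summing, the two copies of $\frac{pq^2}{4n}$ cancel, and since both finite sums carry the identical denominator $2j(j+1)n^j$ (as $2(k+1)k=2j(j+1)$ when $k=j$), the only surviving contribution is the mismatch between the falling factorial $(p)_j$ and the pure power $p^j$:
\[
\log(K_n) + \sum_{j=1}^l E_j = \sum_{j=1}^{l-1} \frac{pq\big[(p)_j - p^j\big]}{2j(j+1)n^j} + \frac{pq\big[(p)_l - p^l\big]}{2ln^l} + o(1).
\]
It thus remains to show that both remaining sums tend to $0$.

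The key elementary estimate I would use is $0 \le p^j - (p)_j \le \tfrac{j(j-1)}{2}\,p^{j-1}$, which follows from $(p)_j = p^j\prod_{i=1}^{j-1}(1-i/p)$ together with $1-\prod_{i=1}^{j-1}(1-i/p) \le \sum_{i=1}^{j-1} i/p = \tfrac{j(j-1)}{2p}$. Feeding this into the first sum, each term is at most $\tfrac14 q\,(p/n)^j$ in absolute value (the factor $\tfrac{j-1}{j+1}$ being $\le 1$), so the whole sum is bounded by
\[
\frac{q}{4}\sum_{j=1}^{l-1}\Big(\frac{p}{n}\Big)^j \le \frac{pq}{4n}\cdot\frac{1}{1-p/n},
\]
which is $o(1)$ since $pq = o(n)$ forces both $pq/n\to 0$ and $p/n\to 0$.

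The part I expect to be the main obstacle is the single $l$-th term, because of the growing factor of order $l$ it carries. The same factorial estimate bounds it by $\tfrac{(l-1)q\,p^l}{4n^l}$. Writing $\theta_n = pq/n$ and using $q\ge 1$, this is at most $\tfrac{l-1}{4}\theta_n^l$, and the defining inequality $l \ge \frac{\log p}{\log(n/pq)}$---equivalently $l\log(1/\theta_n)\ge\log p$---gives $\theta_n^l \le 1/p$, so the term is at most $\tfrac{l-1}{4p}$. This vanishes in both regimes: when $p_n$ is bounded one has $l=1$ for large $n$ (as observed in the proof of the main theorem), so the bound is exactly $0$; and when $p_n\to\infty$ one has $l^6\le p$ for large $n$ (again as in the main proof), whence $\tfrac{l-1}{4p}\to 0$. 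Combining the three displays then yields $\log(K_n)+\sum_{j=1}^l E_j = o(1)$, as required.
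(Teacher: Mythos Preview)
Your proof is correct and follows essentially the same route as the paper: combine Lemmas~\ref{L: Kn} and~\ref{L: Ej}, cancel the $\tfrac{pq^2}{4n}$ terms, and show that the residual sums involving $(p)_j-p^j$ are $o(1)$. Your treatment is in fact more careful than the paper's, which simply asserts ``expanding the falling factorials'' without the explicit bound $p^j-(p)_j\le\tfrac{j(j-1)}{2}p^{j-1}$ or the separate handling of the $l$-th term via the defining inequality for $l$.
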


\begin{proof}
By Lemmas \ref{L: Kn} and \ref{L: Ej}, 
\begin{align*}
\begin{split}
\log (K_n)  & + \sum_{j=1}^l E_j \\
 & = \sum_{j=1}^{l-1} \left( \frac{(p)_jpq}{2j(j+1)n^j} - \frac{p^{j+1}q}{2j(j+1)n^j} \right) \\
 & \qquad + \frac{(p)_l pq}{2ln^l} - \frac{p^{l+1}q}{2ln^l} + o(1).
\end{split} \end{align*}
Expanding the falling factorials, 
\begin{align*} \begin{split}
\sum_{j=1}^{l-1} & \left( \frac{(p)_jpq}{2j(j+1)n^j} - \frac{p^{j+1}q}{2j(j+1)n^j} \right)\\
 &  = \sum_{j=1}^{l-1} \frac{pq^2}{2j(j+1)n^j} \left( (p-1)(p-2) \cdots (p-l+1) - p^{l+1} \right) \\
 & = o(1).
\end{split} \end{align*}
Similarly, 
\begin{equation*}
\frac{(p)_l pq}{2ln^l} - \frac{p^{l+1}q}{2ln^l} = o(1).
\end{equation*}
\end{proof}

\begin{acknowledgements}
The results of this paper are part of a Ph.D. thesis written under the direction of Elizabeth Meckes; the author very much thanks her for many helpful conversations.
\end{acknowledgements}



\end{document}